\newcommand\cyr{%
\renewcommand\rmdefault{wncyr}%
\renewcommand\sfdefault{wncyss}%
\renewcommand\encodingdefault{OT2}%
\normalfont
\selectfont}
\DeclareTextFontCommand{\textcyr}{\cyr}
\DeclareFontFamily{OT1}{rsfs}{}
\DeclareFontShape{OT1}{rsfs}{n}{it}{<-> rsfs10}{}
\DeclareMathAlphabet{\mathscr}{OT1}{rsfs}{n}{it}
\newcommand{\m}{\mathfrak{m}}
\numberwithin{equation}{section}
\newtheorem{theorem}{Theorem}[section]
\newtheorem{lemma}[theorem]{Lemma}
\newtheorem{proposition}[theorem]{Proposition}
\newtheorem{Problem}{Problem}
\newtheorem{Maintheorem}{Main Theorem}
\theoremstyle{definition}
\newtheorem{definition}[theorem]{Definition}
\newtheorem{remark}[theorem]{Remark}
\theoremstyle{remark}
\newtheorem{example}[theorem]{Example}
\newtheorem{counterexample}[theorem]{Counterexample}
\newtheorem{acknowledgement}{Acknowledgement}
\newcommand{\Ass}{\operatorname{Ass}}
\renewcommand{\ker}{\operatorname{Ker}}
\newcommand{\grade}{\operatorname{grade}}
\newcommand{\Spec}{\operatorname{Spec}}
\newcommand{\Ht}{\operatorname{ht}}
\newcommand{\Ext}{\operatorname{Ext}}
\newcommand{\Supp}{\operatorname{Supp}}
\newcommand{\Hom}{\operatorname{Hom}}
\newcommand{\Att}{\operatorname{Att}}
\newcommand{\Char}{\operatorname{char}}
\newcommand{\depth}{\operatorname{depth}}
\newcommand{\Proj}{\operatorname{Proj}}
\newcommand{\Pic}{\operatorname{Pic}}
\newcommand{\Lef}{\operatorname{Lef}}
\newcommand{\bcu}{\bigcup\limits}
\newcommand{\fm}{\frak{m}}
\newcommand{\fp}{\frak{p}}
\newcommand{\fa}{\frak{a}}
\newcommand{\fn}{\frak{n}}
\begin{document}
\title[A Study of quasi-Gorenstein rings II: Deformation of quasi-Gorenstein property]
{A Study of quasi-Gorenstein rings II: Deformation of quasi-Gorenstein property}

\author[K.Shimomoto]{Kazuma Shimomoto}
\address{Department of Mathematics, College of Humanities and Sciences, Nihon University, Setagaya-ku, Tokyo 156-8550, Japan}
\email{shimomotokazuma@gmail.com}

\author[N.Taniguchi]{Naoki Taniguchi}
\address{Department of Mathematics, Purdue University, West Lafayette, IN 47907 U.S.A}
\email{nendo@purdue.edu}

\author[E.Tavanfar]{Ehsan Tavanfar}
\address{School of Mathematics, Institute for Research in Fundamental Sciences (IPM),  P.O. Box: 19395-5746,Tehran, Iran}
\email{tavanfar@ipm.ir}

\thanks{2000 {\em Mathematics Subject Classification\/}13A02, 13D45, 13H10, 14B07, 14B10, 14B12.\\ The research of the third author was supported by a grant from IPM}

\keywords{Canonical module, deformation problem, quasi-Gorenstein ring, local cohomology}


\begin{abstract}
In the present article, we investigate the following deformation problem. Let $(R,\fm)$ be a local (graded local) Noetherian ring with a (homogeneous) regular element $y \in \fm$ and assume that $R/yR$ is quasi-Gorenstein. Then is $R$ quasi-Gorenstein? We give positive answers to this problem under various assumptions, while we present a counter-example in general. We emphasize that absence of the Cohen-Macaulay condition requires delicate and subtle studies.
\end{abstract}

\maketitle

\tableofcontents

\section{Introduction}

In this article, we study the deformation problem of the quasi-Gorenstein property on local Noetherian rings and construct some examples of non-Cohen-Macaulay, quasi-Gorenstein and normal domains. Recall that a local ring $(R,\fm)$ is quasi-Gorenstein, if it has a canonical module $\omega_R$ such that $\omega_R \cong R$. For completeness,
we state the general deformation problem as follows:

\begin{Problem}
Let $(R,\fm)$ be a local (graded local) Noetherian ring and $M$ be a nonzero finitely generated $R$-module with a (homogeneous) $M$-regular element $y \in \fm$. Assume that $M/yM$ has $\mathbf{P}$. Then does $M$ possess $\mathbf{P}$? 
\end{Problem}

By specializing $\mathbf{P}$=quasi-Gorenstein, we prove the following result by constructing an explicit example using Macaulay2 (see Theorem \ref{qGor}):

\begin{Maintheorem}
There exists an example of a local Noetherian ring $(R,\fm)$, together with a regular element $y \in \fm$ such that the following property holds: $R/yR$ is quasi-Gorenstein and $R$ is not quasi-Gorenstein.
\end{Maintheorem}

We notice that if a local ring $(R,\fm)$ is Cohen-Macaulay admitting a canonical module $\omega_R$ satisfying $\omega_R \cong R$, then it is Gorenstein. Thus, the local ring $R$ that appears in Main Theorem 1 is not Cohen-Macaulay. In the absence of Cohen-Macaulay condition, various aspects have been studied around the deformation problem in a recent paper \cite{TT16}. Our second main result is to provide some conditions under which the quasi-Gorenstein condition is preserved under deformation (see Theorem \ref{Theorem1}).

\begin{Maintheorem}
Let $(R,\fm)$ be a local Noetherian ring with a regular element $y \in \fm$, such that $R/yR$ is quasi-Gorenstein. If one of the following conditions holds, then $R$ is also quasi-Gorenstein.
\begin{enumerate}
\item
$R$ is of equal-characteristic $p>0$ that is $F$-finite and the Frobenius action on the local cohomology $H_{\fm}^{\dim R-1}(R/yR)$ is injective.

\item
$R$ is essentially of finite type over $\mathbb{C}$ and $R/yR$ has Du Bois singularities.

\item
$\Ext^1_{\widehat{R}}(\omega_{\widehat{R}},\omega_{\widehat{R}})=0$ and $0:_{\Ext^2_{\widehat{R}}(\omega_{\widehat{R}},\omega_{\widehat{R}})}y=0$, where $\widehat{R}$ is the $\fm$-adic completion of $R$.

\item
Both $R/yR$ and all of the formal fibers of $R$ satisfy Serre's $S_3$.

\item
All of the formal fibers of $R$ are Gorenstein, $R$ is quasi-Gorenstein on $\Spec^\circ(R/yR)$ and $\depth(R)\ge 4$.

\item
All of the formal fibers of $R$ are Gorenstein, $R/yR$ is Gorenstein on its punctured spectrum and $\depth(R)\ge 4$.

\item
$R$ is an excellent normal domain of equal-characteristic zero  such that $R[\frac{1}{y}]$ is also quasi-Gorenstein.
\end{enumerate}
\end{Maintheorem}

While Main Theorem 2 is concerned about local rings, we establish the following result for the graded local rings using algebraic geometry, including Lefschetz condition and vanishing of sheaf cohomology (see Theorem \ref{GrGorenstein}).

\begin{Maintheorem}
Let $R=\bigoplus_{n \ge 0} R_n$ be a Noetherian standard graded ring such that $y \in R$ is a regular element which is homogeneous of positive degree, $R_0=k$ is a field of characteristic zero. Suppose that $R/yR$ is a quasi-Gorenstein graded ring such that $X:=\Proj (R)$ is an integral normal variety and $X_1:=\Proj(R/yR)$ is nonsingular. Then $R$ is a quasi-Gorenstein graded ring.
\end{Maintheorem}

At the time of writing, the following problem remains open, because the example given in Theorem \ref{qGor} is not normal.

\begin{Problem}
Suppose that $(R,\fm)$ be a local (or graded local) ring with a regular element $y \in \fm$ such that $R/yR$ is a quasi-Gorenstein normal local (or graded local) domain. Is $R$ quasi-Gorenstein?
\end{Problem}

In the final section, we construct three non-trivial examples of quasi-Gorenstein normal local domains of depth equal to $2$ that are not Cohen-Macaulay (the final one being with arbitrary admissible dimension at least $3$) in Example \ref{DefNormal}. It will be interesting to ask the reader if any of these examples admits a non quasi-Gorenstein deformation. In the light of the above theorem, it is noteworthy to point out that any homogeneous deformation of the (standard) quasi-Gorenstein ring of Example \ref{DefNormal}(1) is again quasi-Gorenstein, provided that the deformation is standard of equal-characteristic zero. Let us end with a remark on the ubiquity of quasi-Gorenstein rings. 
\begin{enumerate}
\item[]
(Algebraic side): One can easily construct a local ring $(R,\fm)$ with a regular element $y \in \fm$ such that $R/yR$ is quasi-Gorenstein but not Gorenstein, which deforms to a quasi-Gorenstein ring $R$. For instance, take $(S,\fn)$ to be any non-Gorenstein and quasi-Gorenstein local ring. Then the trivial extension $R:=S[[y]]$ provides such an example. More interestingly, let $R$ be a complete local domain of arbitrary characteristic. Then it is shown in \cite{Ta18} that $R$ is dominated by a module-finite extension domain over $R$ that is quasi-Gorenstein and complete intersection at codimension $\le 1$. 

\item[]
(Geometric side):
The class of quasi-Gorenstein rings appears in Du Bois singularities. Indeed, we learn from Main Theorem 2(2) together with the main result in \cite{KS16} that if $R/yR$ has quasi-Gorenstein Du Bois singularity, then $R$ enjoys the same properties. This type of result will be essential for moduli problems as explained in \cite{KS16}. We also recall from \cite{K99} that normal quasi-Gorenstein Du Bois singularities are log canonical. This was previously known as a conjecture of Koll\'ar. On the other hand, it is known from \cite{KK10} that log canonical singularities are Du Bois.
\end{enumerate}

\section{Notation and auxiliary lemmas}

Let $(R,\fm)$ be a local Noetherian ring with Krull dimension $d:=\dim R$ and let $M$ be a finitely generated module. We say that $M$ is a \textit{canonical module} for $R$, if there is an isomorphism $M \otimes_R \widehat{R} \cong H_{\fm}^d(R)^{\vee}$, where $\widehat{R}$ is the $\fm$-adic completion of $R$. In general, assume that $R$ is a Noetherian ring and $M$ is a finitely generated $R$-module. Then $M$ is a \textit{canonical module} for $R$, if $M_{\fp}$ is a canonical module for the local ring $R_{\fp}$ for all $\fp \in \Spec(R)$ with $\dim R_\fp+\dim(R/\fp)=\dim R$. We will write a canonical module as $\omega_R$ in what follows. A local Noetherian ring $(R,\fm)$ is \textit{quasi-Gorenstein}, if there is an isomorphism $H^d_{\fm}(R)^\vee \cong \widehat{R}$. Equivalently, $R$ is quasi-Gorenstein, if $R$ admits a canonical module such that $\omega_R \cong R$ (see \cite{A83}). Let $R$ be a Noetherian ring admitting a canonical module $\omega_R$. Then $R$ is \textit{(locally) quasi-Gorenstein}, if the localization $R_\fp$ for $\fp \in \Spec(R)$ is quasi-Gorenstein in the sense above, or equivalently, $\omega_R$ is a projective module of constant rank 1. Let $R=\bigoplus_{n \ge 0}R_n$ be a graded Noetherian ring such that $R_0=k$ is a field. Then $R$ is \textit{quasi-Gorenstein}, if $\omega_R \cong R(a)$ for some $a \in \mathbb{Z}$ as graded $R$-modules. For a local ring $(R,\fm)$, we write the punctured spectrum $\Spec^{\circ} (R):=\Spec (R) \setminus \{\fm\}$. Let $I$ be an ideal of a ring $R$. Then let $V(I)$ denote the set of all prime ideals of $R$ that contain $I$. We also use some basic facts on \textit{attached primes}. For an Artinian $R$-module $M$, we denote by $\Att_R(M)$ the set of attached primes of $M$ (see \cite{BS12} for a brief summary).

We start by proving the following two auxiliary lemmas. The first lemma is a restatement of \cite[Lemma]{CL94} and we reprove it only for the convenience of the reader.

\begin{lemma}
\label{Lyubeznik}
Suppose that $(R,\fm)$ is a local Noetherian ring with $\depth(R)\ge 2$. Let $\mathfrak{a}$ be an ideal of $R$ such that $\m$ is not associated to $\mathfrak{a}$, the ideal $\mathfrak{a}$ is not contained in any associated  prime of $R$ and $\mathfrak{a}R_{\fp}$ is principal for $\fp \in \Spec^{\circ}(R)$. Then $\fa$ defines an element of $\Pic\big(\Spec^{\circ}(R)\big)$. Moreover if the line bundle attached to $\fa$ is a trivial element of  $\Pic\big(\Spec^{\circ}(R)\big)$, then $\mathfrak{a}$ is a principal ideal.
\end{lemma}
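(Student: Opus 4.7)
The plan is to interpret $\wtd{\fa}$ as a coherent sheaf on the punctured spectrum and recover $\fa$ and $R$ as global sections, using the depth hypotheses.

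First I would check that $\wtd{\fa}|_{\Spec^{\circ}(R)}$ is a line bundle. By prime avoidance applied to $\Ass(R)$, the hypothesis that $\fa$ avoids every associated prime of $R$ provides an $R$-regular element inside $\fa$. Localizing at any $\fp \in \Spec^{\circ}(R)$ preserves this, since the associated primes of $R_\fp$ are the $\fq R_\fp$ for $\fq \in \Ass(R)$ with $\fq \subseteq \fp$, and none of these contain $\fa R_\fp$. Thus the principal ideal $\fa R_\fp$ is in fact generated by a non-zero-divisor and is isomorphic to $R_\fp$ as an $R_\fp$-module, which exhibits $\wtd{\fa}|_{\Spec^{\circ}(R)}$ as an invertible sheaf.

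For the second assertion, the key step is to show $\depth(\fa) \geq 2$. Inserting $0 \to \fa \to R \to R/\fa \to 0$ into the long exact sequence of local cohomology at $\fm$ and combining $\depth R \geq 2$ (so $H^0_\fm(R) = H^1_\fm(R) = 0$) with $H^0_\fm(R/\fa) = 0$ (the hypothesis that $\fm$ is not associated to $\fa$, read as $\fm \notin \Ass(R/\fa)$) yields $H^0_\fm(\fa) = H^1_\fm(\fa) = 0$. Consequently the canonical maps $R \to H^0(\Spec^{\circ}(R), \mathcal{O})$ and $\fa \to H^0(\Spec^{\circ}(R), \wtd{\fa})$ are both isomorphisms. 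If now $\wtd{\fa}|_{\Spec^{\circ}(R)}$ is trivial in $\Pic$, choosing an isomorphism with $\mathcal{O}_{\Spec^{\circ}(R)}$ and taking global sections produces an $R$-module isomorphism $\varphi \colon R \xrightarrow{\sim} \fa$; then $\fa = R\cdot \varphi(1)$ is principal.

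The only place requiring care is the interpretation of the hypothesis "$\fm$ is not associated to $\fa$": the reading $\fm \notin \Ass(\fa)$ would be automatic since $\Ass(\fa) \subseteq \Ass(R)$ and $\depth R \geq 1$, so the intended content must be $\fm \notin \Ass(R/\fa)$, which is precisely what lifts $\depth R \geq 2$ to $\depth(\fa) \geq 2$ via the displayed exact sequence. Beyond fixing this convention, the remaining computations are routine; the main conceptual content is the identification $\fa = H^0(\Spec^{\circ}(R), \wtd{\fa})$ afforded by the depth lemma.
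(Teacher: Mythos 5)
Your argument is correct and follows essentially the same route as the paper: prime avoidance on $\Ass(R)$ to make $\widetilde{\fa}$ invertible on the punctured spectrum, then the vanishing $H^0_\fm(\fa)=H^1_\fm(\fa)=0$ (derived from $\depth R\ge 2$ and $\Gamma_\fm(R/\fa)=0$) to identify $\fa$ and $R$ with the global sections of $\widetilde{\fa}$ and $\mathcal{O}_{\Spec^{\circ}(R)}$ respectively, so that triviality of the class yields $\fa\cong R$ and hence principality. Your reading of the hypothesis as $\fm\notin\Ass(R/\fa)$ is exactly how the paper uses it.
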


\begin{proof}
For each $\mathfrak{p}\in \Spec^{\circ}(R)$, we have $\fa R_\mathfrak{p}=(s)$ for some $s \in R_{\fp}$ by assumption. We need to show that we can choose $s$ as a regular element. Since $\fa$ is not contained in any associated prime of $R$, we have $\mathfrak{a}\nsubseteq \bcu_{\mathfrak{p} \in\Ass(R)}\mathfrak{p}$ by Prime Avoidance Lemma. So the $\mathcal{O}_{\Spec^{\circ}(R)}$-module $\widetilde{\mathfrak{a}}$ is invertible on $\Spec^{\circ}(R)$, which defines an element
$$
[\tilde{\mathfrak{a}}] \in \Pic\big(\Spec^{\circ}(R)\big).
$$
There are two exact sequences: $0\rightarrow \mathfrak{a}/\Gamma_\m(\mathfrak{a})\rightarrow H^0(\Spec^{\circ}(R),\tilde{\mathfrak{a}})\rightarrow H^1_\m(\mathfrak{a})\rightarrow 0$ and $\Gamma_\m(R/\mathfrak{a})\rightarrow H^1_\m(\mathfrak{a})\rightarrow H^1_\m(R)$, where the first exact sequence is due to \cite[III, Exercise 2.3.(e)]{H83} and \cite[III, Exercise 3.3.(b)]{H83}. We have $H^1_\m(R)=0$, because of $\depth(R) \ge 2$. We also have $\Gamma_\m(R/\mathfrak{a})=0$, because $\m$ is not associated to $\mathfrak{a}$. Hence we get $H^0(\Spec^{\circ}(R),\tilde{\mathfrak{a}})=\mathfrak{a}$ ($\Gamma_\m(\mathfrak{a})\subseteq \Gamma_\m(R)=0$). Now suppose that $\tilde{\mathfrak{a}}$ is the trivial element in $\Pic(\Spec^{\circ}(R))$. Then we have $\tilde{\mathfrak{a}}=\mathcal{O}_{\Spec^\circ(R)}$ and hence
$$
\mathfrak{a}=H^0(\Spec^{\circ}(R),\tilde{\mathfrak{a}})\cong H^0(\Spec^{\circ}(R),\mathcal{O}_{\Spec^{\circ}(R)})=R,
$$
where the last equality follows from the exact sequence
$$
0\rightarrow R/\Gamma_\m(R)\rightarrow H^0(\Spec^{\circ}(R),\mathcal{O}_{\Spec^{\circ}(R)}) \rightarrow H^1_\m(R)\rightarrow 0.
$$
\end{proof}

\begin{definition}
Let $\widehat{R}$ be the $\fm$-adic completion of a local ring $(R,\fm)$. We say that $R$ is \textit{formally unmixed}, if $\dim (\widehat{R}/P)=\dim (\widehat{R})$ for all $P \in \Ass(\widehat{R})$.
\end{definition}

\begin{lemma}
\label{unmixed}
Let $(R,\fm)$ be local Noetherian ring and suppose that $y \in \fm$ is a regular element such that $R/yR$ is quasi-Gorenstein. Then $R$ is formally unmixed.
\end{lemma}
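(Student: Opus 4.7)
The plan is to reduce to the $\fm$-adic completion $\widehat{R}$ and to show that every $P\in\Ass(\widehat{R})$ satisfies $\dim\widehat{R}/P=d$, where $d:=\dim R$. Since $\widehat{R}$ is faithfully flat over $R$, the element $y$ remains regular on $\widehat{R}$ and $\widehat{R}/y\widehat{R}\cong\widehat{R/yR}$; in particular $\widehat{R}/y\widehat{R}$ is a complete local quasi-Gorenstein ring. The first step will be to recall that any complete local quasi-Gorenstein ring $(S,\fn)$ of dimension $\delta$ is unmixed: Matlis duality applied to the defining isomorphism $H^\delta_\fn(S)^\vee\cong S$ gives
\[
\Ass(S)\;=\;\Ass\bigl(H^\delta_\fn(S)^\vee\bigr)\;=\;\Att_S\bigl(H^\delta_\fn(S)\bigr)\;=\;\{P\in\Ass(S):\dim S/P=\delta\},
\]
where the last identity is the standard description of attached primes of the top local cohomology. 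Applied to $S=\widehat{R}/y\widehat{R}$, this shows that every associated prime of $\widehat{R}/y\widehat{R}$ has coheight $d-1$.

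For the second step, I fix $P\in\Ass(\widehat{R})$ and exhibit an associated prime of $\widehat{R}/y\widehat{R}$ strictly above it. Since $y$ is $\widehat{R}$-regular we have $y\notin P$, so $\dim\widehat{R}/P\le d$ and the task is the reverse inequality. Pick $x\in\widehat{R}$ with $\Ann_{\widehat{R}}(x)=P$. By Krull's intersection theorem $\bigcap_{n\ge 0}y^n\widehat{R}=0$, so there is a largest integer $n\ge 0$ with $x\in y^n\widehat{R}$, and we may write $x=y^n x_1$ with $x_1\notin y\widehat{R}$. Regularity of $y^n$ forces $\Ann(x_1)=\Ann(y^n x_1)=P$. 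Consequently $\overline{x_1}\in\widehat{R}/y\widehat{R}$ is nonzero and its annihilator contains the image of $P+y\widehat{R}$; choosing any $P^*\in\Ass_{\widehat{R}/y\widehat{R}}\bigl((\widehat{R}/y\widehat{R})\overline{x_1}\bigr)\subseteq\Ass(\widehat{R}/y\widehat{R})$ and lifting it back to $\widehat{R}$, I obtain a prime $P^*\supseteq P+y\widehat{R}$. Since $y\in P^*\setminus P$, the inclusion $P\subsetneq P^*$ is strict, so
\[
\dim\widehat{R}/P\;\ge\;\dim\widehat{R}/P^*+1\;=\;(d-1)+1\;=\;d,
\]
where the middle equality uses the unmixedness of $\widehat{R}/y\widehat{R}$ established in the first step.

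The only mildly delicate ingredient is the Matlis-duality identification of $\Ass$ of the complete quasi-Gorenstein ring $\widehat{R}/y\widehat{R}$ with the attached primes of its top local cohomology, which gives unmixedness. Once this is in hand, the transfer of the coheight statement to $\widehat{R}$ is a routine manipulation via the regular element $y$ and Krull's intersection theorem.
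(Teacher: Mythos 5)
Your proof is correct, but it takes a genuinely different route from the paper's. The paper argues by induction on $d=\dim R$: the base case $d\le 3$ is dispatched because low-dimensional quasi-Gorenstein rings are Gorenstein, and the inductive step localizes at a prime $\fp\supseteq \fq+yR$ with $\dim(R/\fp)=1$, then invokes the fact that a complete quasi-Gorenstein ring is catenary and equidimensional to propagate the height computation back up. You instead give a direct, induction-free argument: first you re-derive unmixedness of the complete quasi-Gorenstein ring $\widehat{R}/y\widehat{R}$ from the MacDonald--Sharp description $\Att\bigl(H^{\delta}_{\fn}(S)\bigr)=\{P\in\Ass(S):\dim S/P=\delta\}$ combined with Matlis duality (the paper simply cites Aoyama for this), and then you produce, for each $P\in\Ass(\widehat{R})$, a prime $P^*\in\Ass(\widehat{R}/y\widehat{R})$ strictly containing $P$ by stripping off the maximal power of $y$ from an element with annihilator $P$ (Krull intersection guarantees this power is finite, and regularity of $y^n$ preserves the annihilator). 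All the individual steps check out: $y\notin P$ since $y$ is a nonzerodivisor, $\dim(\widehat{R}/y\widehat{R})=d-1$, and the chain $P\subsetneq P^*$ gives $\dim\widehat{R}/P\ge d$. Your approach is shorter and avoids both the induction and any appeal to catenarity/equidimensionality of complete quasi-Gorenstein rings; the paper's localization-based induction is more in keeping with the techniques it reuses elsewhere (e.g.\ in Theorem \ref{Theorem1}), but for this lemma in isolation your argument is arguably the cleaner one.
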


\begin{proof}
First of all, recall that a quasi-Gorenstein local ring is unmixed by \cite[(1.8), page 87]{A83}. By definition of formal unmixedness, we can assume that $R$ is complete and we proceed by induction on the Krull dimension $d:=\dim (R)$. If $d \le 3$, then $R/yR$ is a quasi-Gorenstein ring of dimension at most $2$, which implies that $R/yR$ and $R$ are Gorenstein rings, hence $R$ is an unmixed ring. So suppose that $d\ge 4$ and the statement has been proved for smaller values than $d$. Pick $\mathfrak{q}\in \Ass(R)$. Then we have $\dim(R/\mathfrak{q})\ge 2$, because if otherwise, $\depth(R)\le \dim (R/\mathfrak{q})\le 1$ by \cite[Proposition 1.2.13]{BH98}, violating $\depth(R)\ge 3$. Thus, $\dim(R/\mathfrak{q}+yR)\ge 1$ (note that $y\notin \mathfrak{q}$, as $y$ is a regular element).  So we can choose $\mathfrak{p}/yR\in V(\mathfrak{q}+yR/yR)\backslash\{\m/yR\} \subset \Spec(R/yR)$ such that $\dim(R/\mathfrak{p})=1$. Since $R_{\mathfrak{p}}/yR_{\mathfrak{p}}$ is quasi-Gorenstein, the inductive hypothesis implies that $R_\mathfrak{p}$ is formally unmixed. Hence we have that $R_{\fp}$ is unmixed and $\dim (R/\mathfrak{q})-1\ge \Ht(\mathfrak{p}/\mathfrak{q})=\dim(R_\mathfrak{p}/\mathfrak{q} R_\mathfrak{p})=\Ht(\mathfrak{p})$. On the other hand, since $R/yR$ is a complete and quasi-Gorenstein local ring, it is catenary and equi-dimensional. Therefore, we have $\Ht(\m/yR)=\Ht(\mathfrak{p}/yR)+1$ and $\dim (R/\mathfrak{q})=\dim (R)$, as required.  
\end{proof}

Let us recall that the quasi-Gorenstein property admits a nice variant of deformation in \cite[Theorem 2.9]{TT16}:

\begin{theorem}[Tavanfar-Tousi]
\label{TT}
Let $(R,\fm)$ be a local Noetherian ring with a regular element $y \in \fm$. If $R/y^nR$ is quasi-Gorenstein for infinitely many $n\in \mathbb{N}$, then $R$ is quasi-Gorenstein.
\end{theorem}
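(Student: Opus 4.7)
The plan is to reduce to the complete case, identify $\omega_{R/y^nR}$ in terms of $\omega_R$, and then exploit the cyclicity this forces on $\omega_R$. Since quasi-Gorenstein is defined via the completion, both ``$R$ quasi-Gorenstein'' and ``$R/y^nR$ quasi-Gorenstein'' are equivalent to the corresponding statements for the $\fm$-adic completion, with $\widehat{R/y^nR}=\widehat{R}/y^n\widehat{R}$ and $y$ still a non-zero-divisor in $\widehat{R}$. So I would replace $R$ by $\widehat{R}$ from the start, so that $R$ is a complete local Noetherian ring and in particular admits a canonical module $\omega_R$.

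The central step is the identification
\[
\omega_{R/y^nR}\;\cong\;\omega_R/y^n\omega_R,
\]
which I would establish without any Cohen-Macaulay or $\omega_R$-regularity hypothesis on $y$ as follows. Since $y$ is $R$-regular, $0\to R\xrightarrow{y^n}R\to R/y^nR\to 0$ is a length-one free resolution of $R/y^nR$, whence $\Ext^1_R(R/y^nR,\omega_R)\cong \omega_R/y^n\omega_R$ simply as the cokernel of $y^n$ on $\omega_R$. On the other hand, local duality over the complete local ring $R$ with canonical module $\omega_R$ gives $\Ext^1_R(R/y^nR,\omega_R)\cong H^{d-1}_{\fm}(R/y^nR)^\vee$, where $d=\dim R$ and $^\vee$ denotes Matlis duality over $R$ (which agrees with Matlis duality over $R/y^nR$ on modules annihilated by $y^n$). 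Since $\dim R/y^nR=d-1$, this top Matlis dual is by definition $\omega_{R/y^nR}$.

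Combining this with the hypothesis, for each of the infinitely many $n$ with $R/y^nR$ quasi-Gorenstein one gets $\omega_R/y^n\omega_R\cong R/y^nR$, which is in particular cyclic as an $R$-module. Because $y^n\omega_R\subseteq \fm\omega_R$, Nakayama's lemma upgrades this to $\omega_R$ itself being cyclic, so $\omega_R\cong R/J$ with $J=\Ann_R(\omega_R)$. Then
\[
R/(J+y^nR)\;\cong\;\omega_R/y^n\omega_R\;\cong\;R/y^nR,
\]
and since a cyclic module determines its annihilator, $J+y^nR=y^nR$, i.e.\ $J\subseteq y^nR$. As this holds for arbitrarily large $n$, Krull's intersection theorem forces $J\subseteq\bigcap_n y^nR=0$, so $\omega_R\cong R$ and $R$ is quasi-Gorenstein.

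The hard part will be the identification of $\omega_{R/y^nR}$ in the absence of the Cohen-Macaulay property: a direct attack via the long exact sequence of local cohomology attached to $0\to R\xrightarrow{y^n}R\to R/y^nR\to 0$ is obstructed by a contribution from $H^{d-1}_{\fm}(R)$, which need not vanish. Routing the calculation through the length-one free resolution and local duality bypasses this difficulty, because only the cokernel of $y^n$ on $\omega_R$ enters, and the formula falls out cleanly without any regularity hypothesis on $y^n$ acting on $\omega_R$.
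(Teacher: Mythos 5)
Your reduction to the complete case and your endgame (Nakayama to get $\omega_R\cong R/J$, comparison of annihilators of cyclic modules to get $J\subseteq y^nR$, then Krull intersection) are fine. The proof fails at the step you yourself identify as the crux: the isomorphism $\omega_{R/y^nR}\cong\omega_R/y^n\omega_R$. The form of local duality you invoke, $\Ext^{d-i}_R(M,\omega_R)\cong H^i_{\fm}(M)^\vee$, is valid only when $R$ is Cohen--Macaulay, i.e.\ when the normalized dualizing complex $D_R^\bullet$ is concentrated in degree $-d$. In general $h^{-j}(D_R^\bullet)\cong H^j_{\fm}(R)^\vee$ for all $j$, and the hyper-Ext spectral sequence computing $H^{d-1}_{\fm}(R/y^nR)^\vee\cong\mathbb{H}^{1-d}\mathbf{R}\Hom_R(R/y^nR,D_R^\bullet)$ picks up, besides $\Ext^1_R(R/y^nR,\omega_R)\cong\omega_R/y^n\omega_R$, the term $\Hom_R\big(R/y^nR,H^{d-1}_{\fm}(R)^\vee\big)$. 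The correct general statement (this is \cite[Remark 2.3]{TT16}, used repeatedly in the proof of Theorem \ref{Theorem1}) is only a short exact sequence
\begin{equation*}
0\to\omega_R/y^n\omega_R\to\omega_{R/y^nR}\to C_n\to 0,\qquad C_n\cong\big(H^{d-1}_{\fm}(R)/y^nH^{d-1}_{\fm}(R)\big)^\vee,
\end{equation*}
so the obstruction from $H^{d-1}_{\fm}(R)$ that you hoped to bypass reappears exactly as $C_n$; the free-resolution route does not eliminate it, it only conceals it behind an inapplicable duality. The entire content of the theorem is in controlling $C_n$, which your argument never does.

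That the gap is fatal rather than cosmetic is shown by Counterexample \ref{counterexample} of this very paper: your argument uses the quasi-Gorenstein hypothesis for a \emph{single} $n$ to conclude via Nakayama that $\omega_{\widehat{R}}$ is cyclic (the infinitude of $n$'s enters only in the final Krull-intersection step), so it would prove that $R/yR$ quasi-Gorenstein forces $\omega_{\widehat{R}}$ cyclic. In Counterexample \ref{counterexample}, $R/yR$ is quasi-Gorenstein while $\omega_R$ requires $9$ generators; there $\omega_R/y\omega_R$ is a $9$-generated ideal of $\omega_{R/yR}\cong R/yR$ with nonzero cokernel $C_1$. Note also that the paper does not prove Theorem \ref{TT} but quotes it from \cite[Theorem 2.9]{TT16}; any correct proof must genuinely use infinitely many $n$ already at the stage of comparing $\omega_{R/y^nR}$ with $\omega_R/y^n\omega_R$, not merely in the last line.
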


\section{Deformation of quasi-Gorensteinness}

The aim of this section is to present some cases where the quasi-Gorenstein property deforms. We recall the notion of surjective elements which is given in \cite{HMS14}.

\begin{definition}
Let $(R,\fm)$ be a local Noetherian ring. A regular element $y \in \fm$ is called a \textit{surjective element}, if the natural map of local cohomology modules $H^i_{\fm}(R/y^nR) \to H^i_{\fm}(R/yR)$, which is induced by the natural surjection $R/y^nR \to R/yR$, is surjective for all $n>0$ and $i \ge 0$.
\end{definition}

In the parts $(1)$ and $(2)$ of the following theorem, the surjective elements will play a role. In $(2)$, a precise understanding of Du Bois singularities is not necessary, as we only need to use some established facts that follow from the definition.

\begin{theorem}
\label{Theorem1}
Let $(R,\fm)$ be a local Noetherian ring with a regular element $y \in \fm$, such that $R/yR$ is quasi-Gorenstein. If one of the following conditions holds, then $R$ is also quasi-Gorenstein.
\begin{enumerate}
\item
$R$ is of equal-characteristic $p>0$ that is $F$-finite and the Frobenius action on the local cohomology $H_{\fm}^{\dim R-1}(R/yR)$ is injective.

\item
$R$ is essentially of finite type over $\mathbb{C}$ and $R/yR$ has Du Bois singularities.

\item
$\Ext^1_{\widehat{R}}(\omega_{\widehat{R}},\omega_{\widehat{R}})=0$ and $0:_{\Ext^2_{\widehat{R}}(\omega_{\widehat{R}},\omega_{\widehat{R}})}y=0$, where $\widehat{R}$ is the $\fm$-adic completion of $R$.

\item
Both $R/yR$ and all of the formal fibers of $R$ satisfy Serre's $S_3$.

\item
All of the formal fibers of $R$ are Gorenstein, $R$ is quasi-Gorenstein on $\Spec^\circ(R/yR)$ and $\depth(R)\ge 4$.

\item
All of the formal fibers of $R$ are Gorenstein, $R/yR$ is Gorenstein on its punctured spectrum\footnote{According to \cite[9.5.7 Exercise]{BS12}, that a local ring $(R,\fm)$ is generalized Cohen-Macaulay is equivalent to the condition that $R$ is Cohen-Macaulay over the punctured spectrum, provided that $R$ admits the dualizing complex. Moreover, recall that a quasi-Gorenstein Cohen-Macaulay ring is Gorenstein and vice versa.} and $\depth(R)\ge 4$.

\item
$R$ is an excellent normal domain of equal-characteristic zero  such that $R[\frac{1}{y}]$ is also quasi-Gorenstein.
\end{enumerate}
\end{theorem}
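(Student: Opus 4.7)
The plan is to attack all seven parts by a single overarching strategy, then specialize. Writing $\omega:=\omega_{\widehat R}$ and noting that $\widehat R$ is unmixed by Lemma~\ref{unmixed}, the element $y$ is $\omega$-regular; the local cohomology long exact sequence attached to $0\to R\xrightarrow{y}R\to R/yR\to 0$, Matlis-dualized, produces the canonical comparison map $\omega/y\omega\to \omega_{\widehat{R/yR}}$ and shows it is always injective. Because $R/yR$ is quasi-Gorenstein, $\omega_{\widehat{R/yR}}\cong \widehat{R/yR}$; so if we can prove the comparison map is also surjective in each case, then $\omega/y\omega$ is cyclic, Nakayama yields a surjection $\widehat R\twoheadrightarrow \omega$, and its kernel vanishes because $\omega$ has rank one at each minimal prime of the unmixed ring $\widehat R$. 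The required surjectivity is equivalent to the map $H^{d-1}_\fm(R)\to H^{d-1}_\fm(R/yR)$ being zero, and it is this vanishing that each of the seven hypotheses must buy.

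For parts~(1) and~(2) I would plan to upgrade this to the stronger conclusion that $y$ is a \emph{surjective element} in the sense of \cite{HMS14}, making $R/y^nR$ quasi-Gorenstein for every $n$ by induction and invoking the Tavanfar--Tousi Theorem~\ref{TT}. In~(1), $F$-finiteness and injectivity of Frobenius on $H^{d-1}_\fm(R/yR)$ combine through a Peskine--Szpiro/Matlis-dual argument, pairing Frobenius with multiplication by $y$, to pass surjectivity through the tower $\{R/y^nR\}$. In~(2), the Du Bois hypothesis directly supplies surjectivity of the comparison maps via the criterion of Kov\'acs--Schwede \cite{KS16} applied to the Deligne--Du Bois complex.

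Part~(3) is handled by applying $\Hom_{\widehat R}(\omega,-)$ to $0\to \omega\xrightarrow{y}\omega\to \omega/y\omega\to 0$. The identification $\Hom(\omega,\omega)=\widehat R$ (valid because $\widehat R$ is unmixed and admits $\omega$) combined with the vanishing $\Ext^1(\omega,\omega)=0$ and the $y$-torsion freeness of $\Ext^2(\omega,\omega)$ forces $\omega/y\omega\cong \omega_{\widehat{R/yR}}$, so the overall strategy closes. For (4)--(6), the formal-fibre hypotheses let canonical modules commute with completion and let depth pass between $R$ and $\widehat R$; the $S_3$ hypothesis in~(4) ensures $H^i_\fm(\omega)=0$ for $i\le 2$, while $\depth R\ge 4$ in~(5) and~(6) gives vanishing up to $i\le 3$. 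Together with the quasi-Gorenstein (or Gorenstein) hypothesis on $\Spec^\circ(R/yR)$ and Lemma~\ref{Lyubeznik}, this identifies $\omega$ with a principal ideal lifting the generator of $\omega_{R/yR}$.

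For part~(7), the plan is divisor-theoretic: since $R$ is an excellent normal domain, $\omega_R$ is reflexive of rank one and defines a class $[\omega_R]\in \Cl(R)$, and the quasi-Gorensteinness of $R[\tfrac1y]$ places $[\omega_R]$ in the kernel of $\Cl(R)\to \Cl\bigl(R[\tfrac1y]\bigr)$, which is generated by the finitely many height-one primes containing $y$. The quasi-Gorensteinness of $R/yR$ combined with adjunction and equal-characteristic zero (to enable resolution of singularities and Kawamata--Viehweg vanishing) is designed to force each such class to be trivial. The hardest step will be~(7), where excellence and characteristic zero must be deployed in their full strength to trade local quasi-Gorensteinness for a global divisor-class statement; part~(3) is the next most delicate, as isolating the isomorphism $\omega/y\omega\cong \omega_{\widehat{R/yR}}$ requires the two Ext hypotheses to be balanced precisely against each other in a single long exact sequence.
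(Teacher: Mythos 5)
Your global reduction---injectivity of $\omega_{\widehat R}/y\omega_{\widehat R}\hookrightarrow\omega_{\widehat R/y\widehat R}$ is automatic, surjectivity is equivalent to surjectivity of multiplication by $y$ on $H^{d-1}_{\fm}(R)$, and Nakayama plus Lemma \ref{unmixed} and Aoyama's result finish---is exactly the paper's frame, and your outlines of (1) and (2) agree with the paper in substance (the paper passes through $F$-splitness and $F$-anti-nilpotency of $R/yR$ and \cite[Proposition 3.3]{MQ16}; for (4) the operative vanishing is $H^{d-2}_{\fm}(R/yR)=0$ from Schenzel, which forces $H^{d-1}_{\fm}(R)=0$ because $y$ then acts injectively on a $y$-power-torsion module, not a depth bound on $\omega$). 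However, parts (3), (5)--(6) and (7) have genuine gaps. In (3), $\Hom_{\widehat R}(\omega,\omega)\cong\widehat R$ does not follow from unmixedness alone (it is equivalent to $S_2$), and applying $\Hom_{\widehat R}(\omega,-)$ to $0\to\omega\xrightarrow{\cdot y}\omega\to\omega/y\omega\to 0$ only computes $\Hom(\omega,\omega/y\omega)$; it says nothing about the cokernel of $\omega/y\omega\hookrightarrow\omega_{\widehat{R/yR}}$, which is the object you must kill. The paper's argument is an induction on $\dim R$: the inductive hypothesis makes that cokernel finite length; a diagram chase then gives $\Hom_{R/yR}(\omega/y\omega,\omega/y\omega)\cong R/yR$; the two Ext hypotheses are applied to $\Hom(-,\omega)$ to get $\Ext^2_R(\omega/y\omega,\omega)=0$, hence $\Ext^1_{R/yR}(\omega/y\omega,\omega/y\omega)=0$, hence $\depth\omega_R\ge 3$ and $\fm\notin\Att H^{d-1}_{\fm}(R)$; and the Shifted Localization Theorem plus the inductive hypothesis excludes $y$ from every attached prime of $H^{d-1}_{\fm}(R)$, which is what finally makes $y$ surjective on it. None of this is recoverable from your one-line sketch.

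In (5)--(6), Lemma \ref{Lyubeznik} cannot be applied to $\omega_R$ as you propose: the hypothesis only makes $\omega_R$ invertible at primes of $\Spec^{\circ}(R/yR)$, not on all of $\Spec^{\circ}(R)$, so $\omega_R$ defines no class in $\Pic\big(\Spec^{\circ}(R)\big)$. The paper instead works with $\omega_{R/y^nR}$ on $\Spec^{\circ}(R/y^nR)$ for every $n$, proves the restriction maps $\Pic\big(\Spec^{\circ}(R/y^{n}R)\big)\to\Pic\big(\Spec^{\circ}(R/y^{n-1}R)\big)$ are injective using $\depth R\ge4$ and a truncated exponential sequence, deduces via Lemma \ref{Lyubeznik} that each $R/y^nR$ is quasi-Gorenstein, and concludes by Theorem \ref{TT}; the tower and Theorem \ref{TT} are not optional decorations. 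In (7), your plan to show that every class in $\ker\big(\Cl(R)\to\Cl(R[\frac{1}{y}])\big)$ is trivial is false as stated: that kernel is generated by the height-one primes over $y$ and is often nonzero (for $R=k[u,v,w]/(uv-w^2)$ and the element $u$, the kernel is all of $\Cl(R)\cong\mathbb{Z}/2$). The actual content of (7) is the Bhatt--de Jong Lefschetz theorem \cite[Theorem 0.1]{Bd14}, namely injectivity of $\Pic\big(\Spec^{\circ}(R)\big)\to\Pic\big(\Spec^{\circ}(R/yR)\big)$, combined with Noetherian induction to know in the first place that $\omega_R$ is invertible on all of $\Spec^{\circ}(R)$; ``resolution plus Kawamata--Viehweg'' is not a substitute for that theorem.
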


\begin{proof}
In each of the cases $(4)$, $(5)$ and $(6)$, we can suppose that $R$ is complete without loss of generality. More precisely, we apply the assumption on the formal fibers and \cite[Theorem 4.1]{AG85} is needed in addition for part $(5)$ and $(6)$.  By Lemma \ref{unmixed}, $R$ is unmixed and in view of \cite[(1.8), page 87]{A83}, $R$ is quasi-Gorenstein if and only if it has a cyclic canonical module.

We prove the assertions $(1)$ and $(2)$ simultaneously. Then we prove $y$ is a surjective element for all $n>0$ and $i \ge 0$. When $R/yR$ has Du Bois singularities, then it follows from \cite[Lemma 3.3]{MSS16} that $y \in \fm$ is a surjective element. So assume that $R$ satisfies the condition $(1)$. Without loss of generality, we may assume that $R$ is complete. In this case, the Matlis dual of the Frobenius action $H^{\dim R-1}_{\fm}(R/yR) \hookrightarrow H^{\dim R-1}_{\fm}(F_*(R/yR))$ yields a surjection $\phi:F_*(R/yR) \twoheadrightarrow R/yR$ in view of the assumption that $R/yR \cong \omega_{R/yR}$. Then there is an element $F_*a \in F_*(R/yR)$ such that $\phi(F_*a)=1 \in R/yR$. Define a surjective $R$-module map $\Phi:F_*(R/yR) \to R/yR$ by letting $\Phi(F_*t):=\phi(F_*(at))$. Then the map $\Phi$ splits the Frobenius $R/yR \to F_*(R/yR)$. Hence $R/yR$ is $F$-split. As $F$-pure (split) rings are $F$-anti-nilpotent by \cite[Theorem 1.1 and Theorem 2.3]{M14}, it follows that $H^i_{\fm}(R/y^nR) \to H^i_{\fm}(R/yR)$ is surjective by \cite[Proposition 3.5]{MQ16}. 

We have proved that $y$ is a surjective element in $(1)$ and $(2)$. It follows from \cite[Proposition 3.3]{MQ16} that the multiplication map $H^i_{\fm}(R) \xrightarrow{\cdot y} H^i_{\fm}(R)$ is surjective for all $i \ge 0$. Letting $d=\dim R$, the short exact sequence $0 \to R \xrightarrow{\cdot y} R \to R/yR \to 0$ induces a short exact sequence
$$
0 \to H^{d-1}_{\fm}(R/yR) \to H^d_{\fm}(R) \xrightarrow{\cdot y} H^d_{\fm}(R) \to 0.
$$
Taking the Matlis dual of this exact sequence, we obtain the exact sequence:
$$
0 \to \omega_{\widehat{R}}  \xrightarrow{\cdot y} \omega_{\widehat{R}} \to \omega_{\widehat{R}/yR} \to 0.
$$
Hence we have $\omega_{\widehat{R}/y\widehat{R}} \simeq \omega_{\widehat{R}}/y\omega_{\widehat{R}}$. Nakayama's lemma allows us to write $\omega_{\widehat{R}} \simeq \widehat{R}/J$ for some ideal $J \subset \widehat{R}$. By Lemma \ref{unmixed}, $R$ is formally unmixed. Then we conclude that $J=0$ in view of \cite[(1.8)]{A83}. Hence $\omega_{\widehat{R}} \simeq \widehat{R}$.

We prove $(3)$ and argue by induction on dimension $d$. We may assume that $R$ is complete, $d\ge 4$ and that the statement is true in the case $d < 4$. Let us prove that $\Hom_{R/yR}(\omega_R/y\omega_R,\omega_R/y\omega_R)\cong R/yR$.  
By dualizing the exact sequence $H^{d-1}_\fm(R/yR) \to H^d_\fm(R) \xrightarrow{\cdot y} H^d_\fm(R) \to 0$, we have an exact sequence:
\begin{equation}
\label{exact sequence}
0\rightarrow \omega_R/y\omega_R\overset{g}{\rightarrow} \omega_{R/yR}\rightarrow C\rightarrow 0.
\end{equation}            
Consider the commutative diagram:
\begin{equation}
\label{CommutativeDiagram}
\begin{CD}
R/yR@>\alpha>> \Hom_{R/yR}(\omega_R/y\omega_R,\omega_R/y\omega_R) \\
@V\cong V R/yR\text{\ is\ }S_2 V @V\Hom(\text{id},g)V\text{injective}V\\
\Hom_{R/yR}(\omega_{R/yR},\omega_{R/yR})@>\Hom(g,\text{id})>> \Hom_{R/yR}(\omega_R/y\omega_R,\omega_{R/yR})
\end{CD}
\end{equation}
where $\alpha$ is the natural map $\overline{r}\mapsto \{t \mapsto \overline{r}t\}$. Upon the localization at $\fp \in \Spec^{\circ}(R/yR)$, the exact sequence (\ref{exact sequence}) becomes
\begin{center}
$0\rightarrow \omega_{R_\mathfrak{p}}/y\omega_{R_\mathfrak{p}}\overset{g}{\rightarrow} \omega_{R_\mathfrak{p}/yR_\mathfrak{p}}\rightarrow C_{\mathfrak{p}}\rightarrow 0$,
\end{center}
where $C_{\mathfrak{p}}$ is the Matlis dual to $H^{\dim (R_{\mathfrak{p}})-1}_{\mathfrak{p}R_\mathfrak{p}} (R_\mathfrak{p})\big/(y/1) H^{\dim (R_\mathfrak{p})-1}_{\mathfrak{p} R_\mathfrak{p}}(R_\mathfrak{p})$ (see \cite[Remark 2.3.(b)]{TT16}). But by our inductive hypothesis, $R_\mathfrak{p}$ is quasi-Gorenstein for each $\mathfrak{p} \in \Spec^\circ(R/yR)$ and so \cite[Corollary 2.8]{TT16} implies that $H^{\dim (R_\mathfrak{p})-1}_{\mathfrak{p} R_\mathfrak{p}}(R_\mathfrak{p})\big/(y/1) H^{\dim (R_\mathfrak{p})-1}_{\mathfrak{p} R_\mathfrak{p}}(R_\mathfrak{p})=0$ for each $\mathfrak{p} \in \Spec^\circ(R/yR)$. It follows that $C$ is of finite length. In particular, $\Ext^i_{R/yR}(C,\omega_{R/yR})=0$ for $i=0,1$ in view of the fact that $\omega_{R/yR} \cong R/yR$ and \cite[Theorem 6.2.2]{BS12}.

By applying $\Hom_{R/yR}(-,\omega_{R/yR})$ to the exact sequence (\ref{exact sequence}), we find that $\Hom(g,\text{id})$ is an isomorphism. Therefore, the commutative diagram (\ref{CommutativeDiagram}) in conjunction with the injectivity of $\Hom(\text{id},g)$ implies that $\alpha$ is an isomorphism.

Since $\depth(R/yR)\ge 2$ and $\Hom_{R/yR}(\omega_R/y\omega_R,\omega_R/y\omega_R)\cong R/yR$, we get $\depth(\omega_R/y\omega_R)\ge 1$. Applying the hypothesis $\Ext^1_{R}(\omega_{R},\omega_{R})=0$ and $0:_{\Ext^2_{R}(\omega_{R},\omega_{R})}y=0$ to the exact sequence $0 \to \omega_R \xrightarrow{\cdot y} \omega_R \to \omega_R/y\omega_R \to 0$, we get $\Ext^2_R(\omega_R/y\omega_R,\omega_R)=0$. So it follows from \cite[Lemma 3.1.16]{BH98} that $\Ext^1_{R/yR}(\omega_R/y\omega_R,\omega_R/y\omega_R)=0$. Set $N:=\omega_R/y\omega_R$ and assume that $z\in R/yR$ is an $N$-regular element. This choice is possible due to $\depth(\omega_R/y\omega_R)\ge 1$. By applying $\Hom_{R/yR}(N,-)$ to the exact sequence $0 \to N \xrightarrow{\cdot z} N \to N/zN \to 0$, we get an exact sequence:
$$
0 \to \Hom_{R/yR}(N,N)/z\Hom_{R/yR}(N,N) \to \Hom_{R/yR}(N,N/zN) \to \Ext^1_{R/yR}(N,N),
$$
which gives
$$
\Hom_{R/yR}(N,N)/z\Hom_{R/yR}(N,N)\cong \Hom_{R/yR}(N,N/zN).
$$
So we have $\depth(N/zN) \ge 1$, because if otherwise, we would have $\depth(\Hom_{R/yR}(N,N)) \le 1$, which contradicts $\Hom_{R/yR}(N,N)\cong R/yR$ and $\depth(R/yR) \ge 2$ as proved above. It follows that $\depth(\omega_R/y\omega_R)\ge 2$. Thus, we have $\depth(\omega_R)\ge 3$ and $\m \notin \text{Att}(H^{d-1}_{\m}(R))$ in view of \cite[Lemma 2.1 (2)(i)]{AG85}. We claim that 
$$
y\notin \bcu_{\mathfrak{p}\in \text{Att}_{R}\big(H^{d-1}_{\m}(R)\big)}\mathfrak{p}.
$$
Indeed, this implies that the multiplication map $H^{d-1}_{\m}(R) \xrightarrow{\cdot y} H^{d-1}_{\m}(R)$ is surjective in view of \cite[Proposition 7.2.11]{BS12}. So suppose to the contrary that $y\in \mathfrak{p}$ for some $\mathfrak{p}\in \text{Att}_R\big(H^{d-1}_{\m}(R)\big)$. Then by Shifted Localization Theorem, we have $y/1\in \mathfrak{p} R_\mathfrak{p}\in \text{Att}_{R_\mathfrak{p}}\big(H^{\Ht(\mathfrak{p})-1}_{\mathfrak{p} R_\mathfrak{p}}(R_\mathfrak{p})\big)$. As we already proved that $\mathfrak{p}\neq \m$, the induction hypothesis implies that $R_{\mathfrak{p}}$ is quasi-Gorenstein  and by \cite[Corollary 2.8]{TT16}, we must get
$$
y/1\notin \bcu_{\mathfrak{q} R_\mathfrak{p}\in \text{Att}_{R_{\mathfrak{p}}}\big(H^{\Ht(\mathfrak{p})-1}_{\mathfrak{p} R_\mathfrak{p}}(R_\mathfrak{p})\big)}\mathfrak{q} R_\mathfrak{p},
$$
a contradiction. By a similar argument as in part $(1)$ or $(2)$, we can establish $\omega_R\cong R$.

We prove $(4)$. This can be reduced to the situation of part $(5)$, using the Noetherian induction. However, we will deduce it via a simpler proof than the proof of part $(5)$. Since both $R/yR$ and the formal fibers of $R/yR$ have $S_3$, the $\fm$-adic completion of $R/yR$ satisfies the same hypothesis. So let us assume that $R$ is complete. Now $R/yR$ is a quasi-Gorenstein complete local with $S_3$, so we have $H^{d-2}_\m(R/yR)=0$ in view of \cite[Corollary 1.15]{S98}. It follows that the multiplication map $H^{d-1}_\m(R) \xrightarrow{\cdot y} H^{d-1}_\m (R)$ is injective on the $\m$-torsion module $H^{d-1}_\m (R)$, which yields $H^{d-1}_\m(R)=0$. We conclude that $ R/yR\cong \omega_{R/yR}\cong \omega_R/y\omega_R$, showing that $\omega_R$ is cyclic, as required.

We prove $(5)$. Notice that by \cite[Corollary 2.8]{TT16} together with Theorem \ref{TT}, we easily deduce that $R$ is quasi-Gorenstein on $\Spec^\circ(R/yR)$ if and only if $R/y^nR$ is quasi-Gorenstein on $\Spec^{\circ}(R/y^nR)$ for each $n \ge 2$. Suppose that $R$ satisfies these equivalent conditions and $\depth(R)\ge 4$. Moreover, since $R$ has Gorenstein formal fibers, we can suppose that $R$ is a complete local ring without loss of generality. Then both $\omega_R/y^n\omega_R$ and $\omega_{R/y^nR}$ define line bundles on $\Spec^{\circ}(R/y^nR)$.  We claim that these line bundles are identical on $\Spec^{\circ}(R/y^nR)$. By \cite[Remark 2.3]{TT16}, there exists a natural embedding: $\omega_R/y^n\omega_R \hookrightarrow \omega_{R/y^nR}$ whose cokernel $C$ is locally (by Matlis duality) dual to $H^{\dim(R_\mathfrak{p})-1}_{\mathfrak{p} R_\mathfrak{p}}(R_\mathfrak{p})/y^nH^{\dim(R_\mathfrak{p})-1}_{\mathfrak{p} R_\mathfrak{p}}(R_\mathfrak{p})$ for each $\mathfrak{p} \in \Spec(R/yR)$. Since both $R_\mathfrak{p}$ and $R_\mathfrak{p}/y^nR_\mathfrak{p}$ are quasi-Gorenstein for each $\mathfrak{p}\in \Spec^{\circ}(R/yR)$, we have $C_{\fp}=0$ for $\fp \in \Spec^{\circ}(R/y^nR)$ in view of \cite[Corollary 2.8]{TT16} and hence our claim follows. There is a group homomorphism:
$$
\pi_n:\Pic\big(\Spec^{\circ}(R/y^nR)\big) \to \Pic\big(\Spec^{\circ}(R/y^{n-1}R)\big),
$$
which is induced by the natural surjection $M \mapsto M/y^{n-1}M$ for each $n\ge 2$. Since $R/yR$ is quasi-Gorenstein, we have
$$
0=[\omega_{R/yR}]=[\omega_R/y\omega_R]=[\pi_2(\omega_{R}/y^2\omega_{R})]=[\pi_2(\omega_{R/y^2R})],
$$
that is to say, we have $[\omega_{R/y^2R}]\in \ker(\pi_2)$. Since $\depth(R)\ge 4$, arguing as in \cite[III, Exercise 4.6]{H83},  we can apply \cite[III, Exercise 2.3(e)]{H83}, \cite[III, Exercise 3.3(b)]{H83} and \cite[III, Theorem 3.7]{H83} to see that $\pi_2$ is injective and thus, $[\omega_{R/y^2R}]$ is trivial in $\text{Pic}\big(\Spec^{\circ}(R/y^2R)\big)$. By considering the maps $\pi_n$ inductively and using a different but similar exact sequence as in \cite[III, Exercise 4.6]{H83},\footnote{More precisely, consider the exact sequence $0 \to \mathcal{O}_{1} \xrightarrow{g}\mathcal{O}^*_{n+1} \to \mathcal{O}^*_n \to 0$, where $\mathcal{O}^*_n$ denotes the sheaf of the group of invertible elements on $\Spec^{\circ}(R/y^nR)$ and $g$ is defined by $t \mapsto 1+ty^n$.} we can deduce that $[\omega_{R/y^nR}]=0$ as an element of $\Pic\big(\Spec^{\circ}(R/y^nR)\big)$ for each $n\ge 1$.

Suppose to the contrary that $R$ is not quasi-Gorenstein. Then according to Theorem \ref{TT}, there exists an integer $n\ge 2$ such that $R/y^nR$ is not quasi-Gorenstein. For each $n\ge 2$, $R/y^nR$ satisfies Serre's $S_2$-condition, we have $H^{d-1}_\m(\omega_{R/y^nR})\cong E_{R/y^nR}(R/\m)$ in view of \cite[Remark 1.4]{AG85}, because it is quasi-Gorenstein on $\Spec^{\circ}(R/y^nR)$ and $\depth(R/y^nR)\ge 3$ by assumption. Since $R/y^nR$ is generically Gorenstein, $\omega_{R/y^nR} \cong \fa$ for an ideal $\mathfrak{a} \subseteq R/y^nR$ by applying \cite[Lemma 1.4.4]{BH98} and \cite[1.4.18]{BH98}. Since $R/y^nR$ has $S_2$, but is not quasi-Gorenstein, after applying the functor $\Gamma_\m(-)$ to the exact sequence $0\rightarrow \mathfrak{a}\rightarrow R/y^nR\rightarrow (R/y^nR)/\mathfrak{a}\rightarrow 0$, we conclude that $\Ht(\mathfrak{a})\le 1$; otherwise we would get $H^{d-1}_\m(R/y^nR)\cong H^{d-1}_\m(\omega_{R/y^nR})\cong E_{R/y^nR}(R/\m)$, contradicting to our hypothesis that $R/y^nR$ is not quasi-Gorenstein. On the other hand, $\mathfrak{a}$ has trivial annihilator, because $R/y^nR$ is unmixed by \cite[(1.8), page 87]{A83} and \cite[Lemma 1.1]{AG85}. So it follows that $\Ht(\mathfrak{a})=1$. 
Since $\mathfrak{a}$ satisfies $S_2$, we get $\Gamma_\m\big((R/y^nR)/\mathfrak{a}\big)\cong H^1_\m(\mathfrak{a})=0$. Therefore,  $\mathfrak{a}$ satisfies the hypothesis of Lemma \ref{Lyubeznik} and hence it is principal, i.e. $R/y^nR$ is quasi-Gorenstein. But this is a contradiction and we must get that $R/y^nR$ is quasi-Gorenstein for all $n>0$. That is, $R$ is quasi-Gorenstein.

The assertion $(6)$ is a special case of part $(5)$.

Finally, we prove the assertion $(7)$. Suppose the contrary. Then using the Noetherian induction, we may assume that $R_{\fp}$ is quasi-Gorenstein for all $ \fp \in \Spec^\circ(R/yR)$. Since $R[\frac{1}{y}]$ is quasi-Gorenstein by assumption, $\omega_R$ defines an element of $\Pic\big(\Spec^{\circ}(R)\big)$ which, in view of our hypothesis, belongs to
$$
\ker \Big(\Pic\big(\Spec^{\circ}(R)\big) \to \Pic\big(\Spec^{\circ}(R/yR)\big)\Big).
$$ 
Then by virtue of a theorem of Bhatt and de Jong \cite[Theorem 0.1]{Bd14}, $\omega_R$ is the trivial element in $\Pic(\Spec^{\circ}(R))$. Then the desired conclusion follows by applying Lemma \ref{Lyubeznik} to $R$.
\end{proof}

Let us prove a positive result in the graded normal case. First, we prepare a few lemmas.

\begin{lemma}
\label{Helpful}
Suppose that $R=\bigoplus_{n \ge 0} R_n$ is a Noetherian standard graded ring with $\fm:=\bigoplus_{n>0} R_n$ and that $M$ is a finitely generated graded $R$-module with $\grade_{\fm}(M)\ge 2$. Then
$$
M \cong \bigoplus_{n\in \mathbb{Z}}H^0\big(X,\widetilde{M}(n)\big),
$$
where we put $X:=\Proj(R)$.
\end{lemma}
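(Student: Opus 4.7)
The plan is to invoke the standard comparison between graded modules and quasi-coherent sheaves on $\Proj(R)$. For any finitely generated graded module $M$ over a standard graded Noetherian ring $R$, there is a natural four-term exact sequence
\begin{equation*}
0 \to H^0_{\fm}(M) \to M \xrightarrow{\psi} \bigoplus_{n \in \mathbb{Z}} H^0\bigl(X,\widetilde{M}(n)\bigr) \to H^1_{\fm}(M) \to 0,
\end{equation*}
together with isomorphisms $\bigoplus_{n} H^i\bigl(X,\widetilde{M}(n)\bigr) \cong H^{i+1}_{\fm}(M)$ for every $i\ge 1$. This is precisely the content of the two Hartshorne exercises already cited in the proof of Lemma \ref{Lyubeznik}, namely \cite[III, Exercises 2.3(e) and 3.3(b)]{H83}; it can alternatively be verified directly by identifying the \v{C}ech complex on $X$ associated to a set of homogeneous generators of $R_1$ with the corresponding extended \v{C}ech complex computing $H^{\bullet}_{\fm}(M)$, and then reading off degrees.

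With this sequence in hand the rest is automatic. Since $M$ is finitely generated over the Noetherian ring $R$, the grade coincides with the least nonvanishing index of local cohomology, i.e., $\grade_{\fm}(M) = \inf\{\,i : H^i_{\fm}(M) \ne 0\,\}$. The hypothesis $\grade_{\fm}(M)\ge 2$ therefore forces $H^0_{\fm}(M) = H^1_{\fm}(M) = 0$, so both outer terms of the four-term sequence vanish and the middle map $\psi$ becomes an isomorphism, yielding the desired identification.

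There is no real obstacle here: the statement is a purely formal translation between the algebraic and the geometric side, isolated from the rest of the argument because it will be applied repeatedly in the graded case treated in Main Theorem 3, where one needs to recover graded modules (in particular candidates for $\omega_R$) from data on $\Proj(R)$.
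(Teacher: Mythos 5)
Your argument is correct and is essentially the same as the paper's: both rest on the four-term exact sequence $0 \to H^0_{\fm}(M) \to M \to \bigoplus_{n}H^0(X,\widetilde{M}(n)) \to H^1_{\fm}(M) \to 0$ (the paper cites EGA III (2.1.5) where you cite the equivalent Hartshorne exercises) and then kill the outer terms using $\grade_{\fm}(M)\ge 2$.
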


\begin{proof}
According to \cite[(2.1.5)]{Gro1}, there is an exact sequence
$$
0 \to H_{\fm}^0(M) \to M \to \bigoplus_{n\in\mathbb{Z}}H^0\big(X,\widetilde{M}(n)\big) \to H_{\fm}^1(M) \to 0
$$
under the stated hypothesis on $(R,\fm)$. Since $\grade_{\fm}(M)\ge 2$ by assumption, we have the claimed isomorphism.
\end{proof}

We need some tools from algebraic geometry.

\begin{definition}[Lefschetz condition]
Let $X$ be a Noetherian scheme and let $Y \subset X$ be a closed subscheme. Denote by $\widehat{(~)}$ the formal completion along $Y$. Then we say that the pair $(X,Y)$ satisfies the \textit{Lefschetz condition}, written as $\Lef(X,Y)$, if for every open neighborhood $U$ of $Y$ in $X$ and a locally free sheaf $\mathcal{F}$ on $U$, there exists an open subset $U'$ of $X$ with $Y \subset U' \subset U$ such that the natural map
$$
H^0(U',\mathcal{F}|_{U'}) \to H^0(\widehat{X},\widehat{\mathcal{F}})
$$
is an isomorphism.
\end{definition}

The Lefschetz condition has been used to study the behavior of Picard groups or algebraic fundamental groups under the restriction maps. We refer the reader to 
\cite[Chapter IV]{H70} for these topics.

\begin{lemma}
\label{Lefschetz}
Let $X$ be an integral projective variety of dimension $\ge 2$ over a field of characteristic zero and let $D \subset X$ be a nonsingular effective ample divisor. Then the pair $(X,D)$ satisfies the Lefschetz property $\Lef(X,D)$.
\end{lemma}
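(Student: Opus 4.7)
The plan is to appeal to the classical Lefschetz-type theorems for ample divisors developed in \cite[Chapter IV]{H70}, after first isolating the key cohomological inputs. I would exploit the ampleness of $D$ to note that some positive multiple of $D$ is very ample, so that the open complement $U_0:=X\setminus D$ is affine. This makes higher sheaf cohomology on $U_0$ vanish for coherent sheaves. Given a locally free sheaf $\mathcal{F}$ on an open neighborhood $U$ of $D$, the next move is to extend $\mathcal{F}$ to a coherent sheaf $\widetilde{\mathcal{F}}$ on the whole projective variety $X$ by gluing the sheaf on $U$ to a coherent sheaf on the affine $U_0$ along their intersection (shrinking $U$ if needed). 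Thus from the outset one may assume $\mathcal{F}$ is the restriction to $U$ of a global coherent sheaf.

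Next, I would compare $H^0(U',\mathcal{F}|_{U'})$ with $H^0(\widehat{X},\widehat{\mathcal{F}})$ by invoking the long exact sequence of cohomology with supports along $D$, combined with Grothendieck's theorem on formal functions applied to the twists $\widetilde{\mathcal{F}}\otimes \mathcal{O}_X(-nD)$. Serre vanishing for the ample line bundle $\mathcal{O}_X(D)$ controls the tails of the inverse system of infinitesimal neighborhoods of $D$, and passing through sufficiently high orders should produce a neighborhood $U'\subset U$ making the natural comparison map $H^0(U',\mathcal{F}|_{U'}) \to H^0(\widehat{X},\widehat{\mathcal{F}})$ an isomorphism.

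The main obstacle is that only $D$, not all of $X$, is assumed nonsingular, so one cannot directly apply Kodaira--Akizuki--Nakano vanishing on $X$ itself. Instead one must route the argument through the nonsingular divisor $D$ and its infinitesimal thickenings: an induction on the order of the thickening reduces the required vanishing to Kodaira vanishing on $D$, which is available thanks to the smoothness of $D$ and the characteristic zero hypothesis. The cleanest presentation, and the one implicit in the citation, is to simply quote the general Lefschetz theorems in \cite[Chapter IV]{H70}, which cover precisely this setting under the hypotheses at hand; spelling the argument out in full would amount to reproducing Hartshorne's treatment and is best avoided.
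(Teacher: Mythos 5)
Your proposal has a genuine gap at its final (and load-bearing) step. The whole difficulty of this lemma is that $X$ is only assumed integral: it is nonsingular in a neighborhood of the Cartier divisor $D$ (since $D$ itself is nonsingular), but may be badly singular away from $D$. The Lefschetz theorems of \cite[Chapter IV]{H70} that you propose to quote do \emph{not} cover this setting: the characteristic-zero statements there rest on Kodaira--Akizuki--Nakano vanishing and hence require $X$ itself to be nonsingular (or carry global depth/lci hypotheses on all of $X$), and this is precisely why Ravindra and Srinivas \cite{RS06} had to prove a Grothendieck--Lefschetz theorem for non-smooth varieties in the first place. Your intermediate sketch does not repair this: the induction over infinitesimal thickenings $D_n$ and Kodaira vanishing on $D$ only control the inverse system computing $H^0(\widehat{X},\widehat{\mathcal{F}})$, i.e.\ the \emph{formal} side. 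The substantive content of $\Lef(X,D)$ is the algebraization step --- producing honest sections on a Zariski open $U'\supset D$ mapping isomorphically onto the formal sections --- and for surjectivity this requires a vanishing of the shape $H^1\big(X,\mathcal{F}(-nD)\big)=0$ for $n\gg 0$ (or control of cohomology supported in the singular locus of $X\setminus D$), which does not follow from any vanishing on $D$ or its thickenings; moreover your coherent extension of $\mathcal{F}$ across the affine complement is arbitrary, so even Enriques--Severi--Zariski type vanishing is unavailable for it.

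The paper handles exactly this point by a different mechanism: it chooses a nonsingular dense open $V\supset D$, takes a Hironaka resolution $\pi\colon Y\to X$ that is an isomorphism over $V$, and invokes \cite[Lemma 3.4]{RS06} on the smooth $Y$, which yields an isomorphism $H^0\big(Y,\mathcal{F}\otimes\mathcal{O}_Y(E)\big)\cong H^0(\widehat{Y},\widehat{\mathcal{F}})$ up to a correction by an effective exceptional divisor $E$ lying over finitely many points of $X$; discarding $\Supp(E)$ gives surjectivity of $H^0(U,\mathcal{F})\to H^0(\widehat{Y},\widehat{\mathcal{F}})$, injectivity follows from Krull's intersection theorem applied to $\varprojlim_n\mathcal{I}^n\mathcal{F}$, and a comparison of formal completions along $D\cong\pi^{-1}(D)$ descends the statement to $X$. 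Some device of this kind --- resolution plus the Ravindra--Srinivas lemma, or an equivalent treatment of the singularities of $X\setminus D$ --- is indispensable, and it is exactly what your argument omits.
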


\begin{proof}
Since $D$ is locally principal and nonsingular, there exists an open neighborhood $D \subset V$ in $X$ such that $V$ is nonsingular and dense in $X$. By Hironaka's theorem of desingularization, there exists a nonsingular integral variety $Y$ and a proper birational morphism $\pi:Y \to X$ such that $\pi^{-1}(V) \cong V$. By \cite[Lemma 3.4]{RS06},\footnote{To apply the lemma, we need that $X \setminus D$ is affine and the cohomological dimension of $Y \setminus \pi^{-1}(D)$ is at most $\dim Y-1$; these are satisfied in our case in view of \cite[Corollary 3.5 at page 98]{H70}.} there exists an effective Cartier divisor $E \subset Y$ such that either $E=0$ or $\dim \pi(\Supp(E))=0$ and
\begin{equation}
\label{factorization}
H^0\big(Y,\mathcal{F} \otimes \mathcal{O}_Y(E)\big) \cong H^0(\widehat{Y},\widehat{\mathcal{F}})
\end{equation}
for a fixed coherent reflexive sheaf $\mathcal{F}$ on $Y$, where $\mathcal{F}$ is locally free around some neighborhood of $D \cong \pi^{-1}(D)$. Here, $\widehat{(~)}$ is the completion along $\pi^{-1}(D) \subset Y$. For any open neighborhood $\pi^{-1}(D) \subset U$ such that $U \cap \Supp(E)=\emptyset$, the map $(\ref{factorization})$ factors as
$$
H^0\big(Y,\mathcal{F} \otimes \mathcal{O}_Y(E)\big) \to H^0\big(U,\mathcal{F} \otimes \mathcal{O}_Y(E)\big) \to H^0(\widehat{Y},\widehat{\mathcal{F}})
$$
and we have an isomorphism $H^0(U,\mathcal{F} \otimes \mathcal{O}_Y(E)) \cong H^0(U,\mathcal{F})$. Therefore,
\begin{equation}
\label{factorization2}
H^0(U,\mathcal{F}) \to H^0(\widehat{Y},\widehat{\mathcal{F}})
\end{equation}
is surjective. Let us prove that $(\ref{factorization2})$ is injective. Let $\mathcal{I}$ be the ideal sheaf of $D':=\pi^{-1}(D) $ (as a closed subscheme of $U$). Then we have a short exact sequence: $0 \to \mathcal{I}^n \to \mathcal{O}_U \to \mathcal{O}_{D'_n} \to 0$, where $D_n$ is the $n$-th infinitesimal thickening of $D'$. Now we get a short exact sequence 
$$
0 \to \mathcal{I}^n\mathcal{F} \to \mathcal{F} \to \mathcal{F}/\mathcal{I}^n\mathcal{F} \to 0.
$$
Taking cohomology, we get an exact sequence $0 \to H^0(U,\mathcal{I}^n\mathcal{F}) \to H^0(U,\mathcal{F}) \to H^0(D'_n,\mathcal{F}/\mathcal{I}^n\mathcal{F})$. Using \cite[Chapter II, Proposition 9.2]{H83},\footnote{There is a result asserting that the cohomology functor commutes with inverse limit functor under the Mittag-Leffler condition; see \cite[Proposition 8.2.5.3]{Illusie}.} one gets an exact sequence
$$
0 \to \varprojlim_n H^0(U,\mathcal{I}^n\mathcal{F}) \to H^0(U,\mathcal{F}) \to H^0(\widehat{Y},\widehat{\mathcal{F}}),
$$
where the latter map coincides with $(\ref{factorization2})$. So it suffices to prove that $\varprojlim_n H^0(U,\mathcal{I}^n\mathcal{F})=0$. In view of \cite[Chapter II, Proposition 9.2]{H83}, one is reduced to proving that $\varprojlim_n \mathcal{I}^n\mathcal{F}=0$. Since this question is local, we may assume that $U=\Spec (R)$ for a Noetherian ring $R$. Since $Y$ is an integral variety, its open subset $U$ is also integral. Therefore, $R$ is a Noetherian domain. We have
$$
\widetilde{I^n F} \cong \mathcal{I}^n\mathcal{F}
$$
for an ideal $I \subset R$ and a projective $R$-module $F$ of finite rank. However, $R$ is a Noetherian domain, it follows from Krull's intersection theorem that $\bigcap_{n>0} I^nF=0$ and thus
$$
\varprojlim_n \mathcal{I}^n\mathcal{F}=0,
$$
as desired.

For any locally free sheaf $\mathcal{G}$ over an open subset $W \subset X$ such that $D \subset W \subset V$ with $V$ as in the beginning of the proof, since $\pi^{-1}(W) \cong W$, we have the commutative diagram:
$$
\begin{CD}
H^0\big(\pi^{-1}(W),\pi^*\mathcal{G}|_{\pi^{-1}(W)}\big) @>\simeq>> H^0\big(\widehat{Y},\widehat{\pi^*\mathcal{G}|_{\pi^{-1}(W)}}\big) \\
@| @AAA \\
H^0(W,\mathcal{G}) @>>> H^0(\widehat{X},\widehat{\mathcal{G}}) \\
\end{CD} 
$$
where the vertical map on the right is induced by the map $\pi$ and the horizontal map on the top is an isomorphism, due to $(\ref{factorization2})$. On the other hand, letting $\mathcal{J}$ be the ideal sheaf of $D \subset W$, we have isomorphisms $\pi^{-1}(D)_n \cong D_n$ and
$$
H^0(\widehat{X},\widehat{\mathcal{G}}) \cong \varprojlim_{n>0} H^0(D_n,\mathcal{G}/\mathcal{J}^n\mathcal{G}) \cong
\varprojlim_{n>0} H^0\big(\pi^{-1}(D)_n,\pi^*\mathcal{G}/\pi^{-1}(\mathcal{J})^n\pi^*\mathcal{G}\big) \cong H^0\big(\widehat{Y},\widehat{\pi^*\mathcal{G}|_{\pi^{-1}(W)}}\big).
$$
In summary, $H^0(W,\mathcal{G}) \to H^0(\widehat{X},\widehat{\mathcal{G}})$ is an isomorphism, which shows that the pair $(X,D)$ satisfies $\Lef(X,D)$, as desired.


\end{proof}

Let us prove the following result.

\begin{theorem}
\label{GrGorenstein}
Let $R=\bigoplus_{n \ge 0} R_n$ be a Noetherian standard graded ring such that $y \in R$ is a regular element which is homogeneous of positive degree, $R_0=k$ is a field of characteristic zero. Suppose that $R/yR$ is a quasi-Gorenstein graded ring such that $X:=\Proj (R)$ is an integral normal variety and $X_1:=\Proj(R/yR)$ is nonsingular. Then $R$ is a quasi-Gorenstein graded ring.
\end{theorem}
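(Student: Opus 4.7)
The plan is to reduce $\omega_R\cong R(a)$ to a sheaf-theoretic statement on $X=\Proj(R)$ via Lemma \ref{Helpful}, compute the restriction of $\widetilde{\omega_R}$ to $X_1$ by adjunction, and then use Lemma \ref{Lefschetz} together with Kodaira vanishing to produce a trivialization; the main obstacle is extending this trivialization globally on $X$. Since quasi-Gorenstein rings are $S_2$ by \cite[(1.8), page 87]{A83}, $\depth(R/yR)\ge 2$ and hence $\depth(R)\ge 3$. The canonical module $\omega_R$ is also $S_2$ (a standard property of canonical modules), so $\grade_\fm(\omega_R)\ge 2$. Lemma \ref{Helpful} then yields $R=\bigoplus_n H^0(X,\mathcal{O}_X(n))$ and $\omega_R=\bigoplus_n H^0(X,\widetilde{\omega_R}(n))$, so it suffices to prove $\widetilde{\omega_R}\cong\mathcal{O}_X(a)$ as $\mathcal{O}_X$-modules. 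The low-dimensional cases $\dim R\le 3$ are handled separately: $R/yR$ being quasi-Gorenstein of dimension $\le 2$ forces it to be Cohen-Macaulay, hence Gorenstein, and $R$ is Gorenstein by classical deformation. So we assume $\dim R\ge 4$, i.e., $\dim X_1\ge 2$.

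Since $X_1$ is a smooth effective Cartier divisor on the normal variety $X$ cut out by $y\in H^0(X,\mathcal{O}_X(e))$ with $e=\deg y$, $X$ is regular on some open neighborhood $V\supset X_1$, and $\widetilde{\omega_R}|_V=\omega_V$ is a line bundle. Adjunction on $V$ combined with the hypothesis $\omega_{R/yR}\cong(R/yR)(b)$ (which sheafifies to $\omega_{X_1}\cong\mathcal{O}_{X_1}(b)$) yields $\widetilde{\omega_R}|_{X_1}\cong\omega_{X_1}(-e)\cong\mathcal{O}_{X_1}(b-e)$. Set $a:=b-e$ and $\mathcal{M}:=\widetilde{\omega_R}(-a)$, so that $\mathcal{M}|_{X_1}\cong\mathcal{O}_{X_1}$ with canonical generator $1$. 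The obstructions to lifting $1$ through the infinitesimal thickenings $X_{1,n}\subset V$ lie in $H^1(X_1,\mathcal{O}_{X_1}(-ne))$ for $n\ge 1$; these vanish by Kodaira vanishing, since we are in characteristic zero, $\dim X_1\ge 2$, and $\mathcal{O}_{X_1}(1)$ is ample. This produces a nowhere-vanishing generator $\hat{s}\in H^0(\widehat{X},\widehat{\mathcal{M}})$ of the formal sheaf. By Lemma \ref{Lefschetz} the pair $(X,X_1)$ satisfies $\Lef(X,X_1)$, and applied to the locally free $\mathcal{M}|_V$, it descends $\hat{s}$ to an honest section $s\in H^0(U',\mathcal{M}|_{U'})$ on some Zariski open $U'\subset V$ containing $X_1$; after shrinking $U'$, $s$ is nowhere vanishing, giving $\mathcal{M}|_{U'}\cong\mathcal{O}_{U'}$.

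The final step is to promote $\mathcal{M}|_{U'}\cong\mathcal{O}_{U'}$ to $\mathcal{M}\cong\mathcal{O}_X$. Via Lemma \ref{Helpful}, the section $s$ corresponds to an element $\sigma\in(\omega_R)_a$ together with a degree-zero $R$-module map $\phi\colon R\to\omega_R(a)$, $r\mapsto r\sigma$, which is an isomorphism on $U'$. Because both sides are $S_2$, $\phi$ is an isomorphism iff the divisor class $[\widetilde{\omega_R}]-a[\mathcal{O}_X(1)]\in\Cl(X)$ vanishes; by the construction above, this class already vanishes upon restriction to $\Cl(X_1)$. The main obstacle is precisely to deduce the global vanishing from this restriction statement: Lemma \ref{Lefschetz} only controls the formal (and via descent, a Zariski) neighborhood of $X_1$, so additional input is needed --- either a Grothendieck--Lefschetz-type injectivity for the restriction $\Cl(X)\to\Cl(X_1)$ on the normal projective $X$ with smooth ample divisor $X_1$ (in the spirit of \cite{RS06}), or a Noetherian induction applying Theorem \ref{Theorem1} at each $\fp\in\Spec^\circ(R/yR)$ to guarantee that $\widetilde{\omega_R}$ is a line bundle off $X_1$ as well, thereby reducing the $\Cl$-comparison to a $\Pic$-comparison that Lefschetz can handle.
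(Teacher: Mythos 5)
Your argument tracks the paper's own proof almost step for step up to the very end: the reduction via Lemma \ref{Helpful}, the separate treatment of $\dim R\le 3$, the trivialization of $\widetilde{\omega_R}$ along the successive thickenings of $X_1$ using Kodaira vanishing of $H^1\big(X_1,\mathcal{O}_{X_1}(-ne)\big)$ (the paper phrases this as injectivity of $\Pic(X_{n+1})\to\Pic(X_n)$ via the sequence $0\to\mathcal{O}_{X_1}(-dn)\to\mathcal{O}^*_{X_{n+1}}\to\mathcal{O}^*_{X_n}\to 0$), and the descent from the formal completion to a Zariski neighborhood $U$ of $X_1$ via $\Lef(X,X_1)$ (Lemma \ref{Lefschetz}). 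But your final step is an explicitly acknowledged gap, not a proof: you correctly observe that $\Lef(X,X_1)$ only controls a neighborhood of $X_1$ and that ``additional input is needed'' to pass from $\mathcal{M}|_{U'}\cong\mathcal{O}_{U'}$ to $\mathcal{M}\cong\mathcal{O}_X$. Neither of your two suggested fixes is secured: the Grothendieck--Lefschetz theorem of \cite{RS06} gives injectivity of $\Cl(X)\to\Cl(Y)$ only for a \emph{general} member $Y$ of an ample linear system, not for the particular divisor $X_1$ you are handed; and the Noetherian-induction route would still leave you needing an injectivity statement for $\Pic$ of punctured spectra that is not available without further hypotheses.

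The paper closes this step without any global $\Pic$ or $\Cl$ comparison, and this is the idea you are missing: one never needs to extend the sheaf isomorphism beyond $U$, only to compare \emph{global sections}. Since $X\setminus X_1$ is affine and $Z:=X\setminus U$ is a closed subset of it that is proper over $k$, $Z$ is zero-dimensional, hence of codimension $\dim X\ge 3$ in $X$. Because $\widetilde{\omega_R}(m)$ and $\mathcal{O}_X(m)$ are $S_2$ sheaves, the local cohomology sequence $H^0_Z\to H^0(X,-)\to H^0(U,-)\to H^1_Z$ has vanishing outer terms, so $H^0\big(X,\widetilde{\omega_R}(m)\big)\cong H^0\big(U,\widetilde{\omega_R}(m)\big)$ and likewise for $\mathcal{O}_X(m)$. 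Combining this with the degreewise isomorphisms $H^0\big(U,\widetilde{\omega_R}(2d-a+m)\big)\cong H^0\big(U,\mathcal{O}_X(d+m)\big)$ furnished by the Lefschetz condition, Lemma \ref{Helpful} reassembles these into the graded isomorphism $\omega_R\cong R(-d+a)$ directly. In short: your ``main obstacle'' dissolves once you replace the goal ``trivialize the class in $\Cl(X)$'' by the weaker but sufficient goal ``identify all graded pieces of $\omega_R$ with those of $R(-d+a)$,'' which only requires sections over $U$ together with the $S_2$ extension property across the zero-dimensional set $Z$.
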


\begin{proof}
Let us fix notation: $R_{(n)}:=R/y^nR$, $\fm:=\bigoplus_{n \ge 1} R_n$ and $X_n:=\Proj (R/y^nR)$ for each $n >0$. Since $R_{(n)}$ is a standard graded ring over the field $k$, the sheaves $\mathcal{O}_{X_n}(m)$ are invertible for $m \in \mathbb{Z}$ and $n > 0$.\footnote{The paper \cite{GW78} considers a more generalized version of standard graded rings, known as "condition $(\#)$" in \cite[page 206]{GW78}.} Assume that $R/yR$ is quasi-Gorenstein. Then:
\begin{equation}
\label{divisorial}
\depth R \ge 3,~\mathcal{O}_{X}(n)~\mbox{is invertible and}~\widetilde{\omega_R}(n)~\mbox{is an}~S_2\mbox{-sheaf}.
\end{equation}

Now let us prove that $R$ is quasi-Gorenstein. First, assume that $\dim X \le 2$, or equivalently $\dim R \le 3$. Since $R/yR$ is quasi-Gorenstein, it has $\dim R/yR=\depth R/yR \ge 2$, in which case it is immediate to see that $R$ is a Gorenstein graded ring. In what follows, let us assume that $\dim X \ge 3$ and set $d:=\deg(y)$. Then we have a short exact sequence: $0 \to y^{n}R/y^{n+1}R \to R/y^{n+1}R \to R/y^nR \to 0$. Put $\mathcal{O}_{X_1}(-dn):=\widetilde{R/yR}(-dn)$. Then there is an isomorphism
$$
\mathcal{O}_{X_1}(-dn) \xrightarrow{\cdot y^n} \big(\widetilde{y^{n}R/y^{n+1}R}\big)~\mbox{as}~\mathcal{O}_{X_1}\mbox{-modules}.
$$
Then we get an exact sequence of abelian sheaves:
\begin{equation}
\label{shortexact}
0\rightarrow \mathcal{O}_{X_1}(-dn) \xrightarrow{\alpha} \mathcal{O}^*_{X_{n+1}}\rightarrow \mathcal{O}_{X_n}^*\rightarrow 0
\end{equation}
on the topological space $X_1$, where $\alpha(t):=1+ty^n$. Since $\mathcal{O}_{X_1}(-dn)$ is the dual of an ample divisor for $n > 0$, we have $H^1\big(X_1,\mathcal{O}_{X_1}(-dn)\big)=0$ for $n>0$ by Kodaira's vanishing theorem. Hence the map between Picard groups induced by $(\ref{shortexact})$
\begin{equation}
\label{KernelPic}
\pi_{n+1}:\Pic(X_{n+1}) \to \Pic(X_{n})
\end{equation}
is injective in view of \cite[III, Exercise 4.6]{H83}. Denote by $a:=a(R_{(1)})$ the $a$-invariant of $R_{(1)}$. Then we have $\omega_{R_{(1)}} \cong R_{(1)}(a)$ and thus by \cite[Lemma (5.1.2)]{GW78}, 
$$
\widetilde{\omega_{R_{(1)}}}(-a) \cong \widetilde{\omega_{R_{(1)}}} \otimes \mathcal{O}_{X_1}(-a) \cong \mathcal{O}_{X_1}(a)\otimes\mathcal{O}_{X_1}(-a) \cong \mathcal{O}_{X_1}.
$$
Since $y \in R$ is regular and $X_1 \subset X$ is a nonsingular divisor, $X$ is nonsingular in a neighborhood of $X_1$ and $X_1=X_2=\cdots$ as topological spaces. In particular, $X_n$ is a Gorenstein scheme for $n\ge 1$. By \cite[Theorem (A.3.9)]{TW89}, we have $\big[\widetilde{\omega_{R_{(n)}}}\big] \in \Pic(X_n)$  for $n \ge 1$. Consider the short exact sequence $0 \to R(-dn) \xrightarrow{\cdot y^n} R \to R_{(n)} \to 0$. By \cite[Proposition (2.2.9)]{GW78}, we get an injection:
$$\
\big(\omega_R/y^n\omega_R\big)(dn) \hookrightarrow \omega_{R_{(n)}}.
$$
Then an inspection of the proof of \cite[Proposition (2.2.10)]{GW78}, together with the fact that $X_n$ is Gorenstein, yields that
$$
\widetilde{\big(\omega_R/y^n\omega_R\big)}(dn) \cong \widetilde{\omega_{R_{(n)}}}~\mbox{for}~n>0.
$$ 
Hence we have $\big[\widetilde{\big(\omega_R/y^n\omega_R\big)}(dn) \big] \in \Pic(X_n)$ and $\big[\widetilde{\big(\omega_R/y^n\omega_R\big)}(m)\big] \in \Pic(X_n)$ for $m \in \mathbb{Z}$ and $n\ge 2$. Since $\big[\widetilde{\big(\omega_R/y\omega_R\big)}(d-a)\big] \in \Pic(X_1)$ is trivial, it follows from $(\ref{KernelPic})$ that 
$$
\widetilde{\big(\omega_R/y^{n+1}\omega_R\big)}(2d-a) \cong \mathcal{O}_{X_{n+1}}(d)
$$
for $n>0$. Since $X_1 \subset X$ is a nonsingular divisor, there is an open neighborhood $X_1 \subset U$ such that $U$ is nonsingular. In particular, it follows that $\widetilde{\omega_R}(2d-a)\big|_{U}$ is a line bundle. There are isomorphisms for all $n>0$ and $m \in \mathbb{Z}$:
$$
\mathcal{O}_{X_{n+1}}(d+m) \cong \widetilde{\big(\omega_R/y^{n+1}\omega_R\big)}(2d-a+m) \cong \widetilde{\omega_R}(2d-a+m)\Big/\widetilde{y^{n+1}\omega_R}(2d-a+m).
$$
Hence we get $\widehat{\mathcal{O}_X(d+m)} \cong \widehat{\widetilde{\omega_R}(2d-a+m)}$, where $\widehat{(~)}$ is the formal completion along the closed subscheme $X_1 \subset X$. Therefore,
$$
\big[\widetilde{\omega_R}(2d-a+m)\big|_U\big]-\big[\mathcal{O}_X(d+m)\big|_U\big] \in \ker\big(\Pic(U) \rightarrow \Pic(\widehat{X})\big).
$$
Notice that $X_1 \subset X$ is a nonsingular Cartier divisor and the pair $(X,X_1)$ satisfies the property $\Lef(X,X_1)$ in view of Lemma \ref{Lefschetz}. So after possibly shrinking $U$ more, it follows that
\begin{equation}
\label{CoherentSheavesIdentity}
H^0\big(U,\widetilde{\omega_R}(2d-a+m)\big) \cong H^0\big(\widehat{X},\widehat{\widetilde{\omega_R}(2d-a+m)}\big) \cong H^0\big(\widehat{X},\widehat{\mathcal{O}_{X}(d+m)}\big) \cong H^0\big(U,\mathcal{O}_{X}(d+m)\big).
\end{equation}
We claim that $Z:=X \setminus U$ is zero-dimensional. Indeed, the complement $X \setminus X_1$ is affine. On the other hand, $Z$ is a proper scheme over $k$  that is contained in $X \setminus X_1$, so $Z$ must be a zero-dimensional closed set in $X$. Using these facts together with the hypothesis $\dim X \ge 3$ and $(\ref{divisorial})$, we have an exact sequence: 
$$
0=H_Z^0\big(X,\widetilde{\omega_R}(m)\big) \to H^0\big(X,\widetilde{\omega_R}(m)\big) \to H^0\big(U,\widetilde{\omega_R}(m)\big) \to H_Z^1\big(X,\widetilde{\omega_R}(m)\big)=0
$$
in view of \cite[III, Exercise 2.3 (e) and (f)]{H83}, and so an isomorphism $H^0\big(X,\widetilde{\omega_R}(m)\big) \cong H^0\big(U,\widetilde{\omega_R}(m)\big)$. Likewise, we have $H^0\big(X,\mathcal{O}_{X}(m)\big) \cong H^0\big(U,\mathcal{O}_{X}(m)\big)$. So it follows from $(\ref{CoherentSheavesIdentity})$ and Lemma \ref{Helpful} that
$$
\omega_R \cong \bigoplus_{m \in \mathbb{Z}} H^0\big(X,\widetilde{\omega_R}(m)\big) \cong \bigoplus_{m \in \mathbb{Z}}H^0\big(X,\mathcal{O}_{X}(-d+a+m)\big) \cong R(-d+a),
$$
and $R$ is quasi-Gorenstein, as desired.
\end{proof}

\begin{remark}
One could try to prove results similar to Theorem \ref{GrGorenstein} for non standard graded rings. It is worth pointing out that examples of non Cohen-Macaulay quasi-Gorenstein, non standard graded rings constructed by using ample invertible sheaves are given in \cite{C14} and examples constructed by using non-integral $\mathbb{Q}$-divisors are given in Example \ref{DefNormal}(2), while examples that are standard graded are easily constructed as in Example \ref{DefNormal}(1).
\end{remark}

The following proposition shows ubiquity of quasi-Gorestein graded rings, which is an unpublished result due to K-i.Watanabe.

\begin{proposition}[K-i.Watanabe]
Let $X$ be an integral normal projective variety of dimension at least 2 defined over an algebraically closed field $k$. Then there exists a quasi-Gorenstein, Noetherian normal graded domain $R=\bigoplus_{n \ge 0} R_n$ with $R_0=k$ such that $X \simeq \Proj(R)$.
\end{proposition}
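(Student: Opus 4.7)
The plan is to realize $X$ as $\Proj$ of a generalized section ring attached to an ample $\mathbb{Q}$-Weil divisor, following the Demazure--Watanabe construction, and to arrange that $\mathbb{Q}$-divisor so that the resulting graded canonical module is cyclic. First I would fix an ample Cartier divisor $H$ on $X$ and a canonical Weil divisor $K_X$ (which exists since $X$ is normal). The key preparatory step is to produce an ample $\mathbb{Q}$-Weil divisor $D$ on $X$ together with an integer $a$ such that $K_X + \lfloor nD\rfloor \sim \lfloor(n+a)D\rfloor$ in $\Cl(X)$ for every $n\in\mathbb{Z}$; when $X$ is not $\mathbb{Q}$-Gorenstein, $K_X$ is not $\mathbb{Q}$-Cartier, so $D$ must be allowed to carry fractional coefficients along prime divisors where $K_X$ fails to be $\mathbb{Q}$-Cartier, tuned so that the floor operation absorbs the non-Cartier contribution of $K_X$.

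Given such a $D$, I would then form
\[
R := \bigoplus_{n \ge 0} H^0\bigl(X, \mathcal{O}_X(\lfloor nD\rfloor)\bigr),
\]
where $\mathcal{O}_X(E)$ denotes the rank-one reflexive sheaf on $X$ associated to the Weil divisor $E$; multiplication is induced by the canonical maps $\mathcal{O}_X(\lfloor mD\rfloor)\otimes \mathcal{O}_X(\lfloor nD\rfloor)\to \mathcal{O}_X(\lfloor(m+n)D\rfloor)$ of reflexive sheaves, obtained by multiplying rational sections and extending across the codimension-two singular locus by reflexivity. Choosing a positive integer $N$ with $ND$ Cartier and ample, the Veronese subring $R^{(N)}$ coincides with the classical section ring of the ample line bundle $\mathcal{O}_X(ND)$, which is Noetherian with $\Proj$ isomorphic to $X$; since $R$ is integral and finite over $R^{(N)}$, the ring $R$ is itself Noetherian with $\Proj(R)\cong X$. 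Normality follows because each graded piece is a reflexive rank-one sheaf on the normal variety $X$, and $R_0=k$ holds because $X$ is an integral projective variety over the algebraically closed field $k$.

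Quasi-Gorensteinness is then verified using Watanabe's formula
\[
\omega_R \cong \bigoplus_{n \in \mathbb{Z}} H^0\bigl(X, \mathcal{O}_X(K_X + \lfloor nD\rfloor)\bigr)
\]
for the graded canonical module: the family of linear equivalences $K_X + \lfloor nD\rfloor \sim \lfloor(n+a)D\rfloor$ built into the choice of $D$ upgrades this to an isomorphism of graded $R$-modules $\omega_R \cong R(a)$, so that $R$ is quasi-Gorenstein of $a$-invariant $-a$.

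The main obstacle lies entirely in the existence of a suitable $D$: when $X$ is not $\mathbb{Q}$-Gorenstein, $K_X$ cannot literally be equated with any $\mathbb{Q}$-Cartier class, so producing an ample $\mathbb{Q}$-Weil divisor $D$ satisfying the required family of linear equivalences with $K_X$ requires the full flexibility of fractional Weil coefficients on prime components outside the $\mathbb{Q}$-Cartier locus, combined with the floor-function machinery. Exhibiting such a $D$ for an arbitrary normal projective variety $X$ is the nontrivial content of Watanabe's construction, whereas the remaining Noetherianity, normality and $\Proj$-identification steps reduce to standard Veronese-finiteness arguments.
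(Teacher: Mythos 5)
There is a genuine gap: the entire mathematical content of this proposition is the existence of a suitable $\mathbb{Q}$-divisor $D$, and your proposal explicitly defers that step (``Exhibiting such a $D$ \dots is the nontrivial content of Watanabe's construction''). Everything you do carry out --- Noetherianity and normality of $R(X,D)$, the identification $\Proj(R)\cong X$ via the Veronese subring --- is the routine part of Demazure's theory and holds for \emph{any} ample $\mathbb{Q}$-Cartier $D$; it does not touch quasi-Gorensteinness. Without producing $D$ you have not proved the statement. (The paper itself handles this by citing \cite[Proposition 5.9]{Sh17} and noting that the cohomological vanishing hypothesis there can be dropped, so it does point at a complete construction rather than leaving it open.) A second, related problem is that your canonical module formula is wrong: by \cite[Theorem (2.8), Corollary (2.9)]{W81} (the very criterion the paper invokes in Example \ref{DefNormal}(2)), if $D=\sum (p_i/q_i)E_i$ in lowest terms then
$$
\omega_{R(X,D)}\cong\bigoplus_{n\in\mathbb{Z}}H^0\big(X,\mathcal{O}_X(\lfloor K_X+D'+nD\rfloor)\big),\qquad D':=\sum \tfrac{q_i-1}{q_i}E_i,
$$
so the condition to arrange is $K_X+D'\sim aD$ with integral principal difference, not $K_X+\lfloor nD\rfloor\sim\lfloor (n+a)D\rfloor$. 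Omitting $D'$ means that even if you had a candidate $D$, you would be verifying the wrong identity.

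The missing construction is short once the correct criterion is in hand. Choose an ample Cartier divisor $H'$ so ample that the reflexive sheaf $\mathcal{O}_X(-2K_X+H')$ is globally generated, and let $E=\sum E_i$ be a general (hence reduced, effective) member of $|-2K_X+H'|$, so that $2K_X+E\sim H'$ is Cartier and ample. Set $D:=K_X+\tfrac12 E$. Then $2D\sim H'$ is ample Cartier, so $D$ is an ample $\mathbb{Q}$-Cartier Weil divisor and Demazure's theorem applies; the fractional part of $D$ is $\tfrac12\sum E_i$, so $D'=\tfrac12 E$ and $K_X+D'-D=0$, whence $R(X,D)$ is quasi-Gorenstein with $a$-invariant $1$ by \cite[Corollary (2.9)]{W81}. (If $2K_X$ is already ample Cartier one takes $D=K_X$ directly.) Note that $D$ is $\mathbb{Q}$-Cartier even when $K_X$ is not, because the non-Cartier parts of $K_X$ and of $\tfrac12E$ cancel in $\Cl(X)\otimes\mathbb{Q}$ by construction; your remark that $D$ must carry fractional coefficients exactly where $K_X$ fails to be $\mathbb{Q}$-Cartier is the right instinct, but it needs to be implemented, not just announced.
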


\begin{proof}
The proof cited in \cite[Proposition 5.9]{Sh17} applies directly to our case after dropping the assumption that $H^i(X,\mathcal{O}_X)=0$ for $0 < i < \dim X$.
\end{proof}

\section{Failure of deformation of quasi-Gorensteinness}

In view of Theorem \ref{Theorem1} $(7)$ together with \cite[Theorem 2.9]{TT16}, it seems to be promising that the quasi-Gorenstein property deforms (at least in equal-characteristic zero). However, counterexamples exist in both of prime characteristic and equal-characteristic zero cases.

\begin{counterexample}
\label{counterexample}
Suppose that $k$ is a field of either characteristic $2$ or of characteristic zero. Let us define $S$ to be the Segre product:
$$
S:=k[x,y,z]/(x^3)\  \# \ k[a,b,c]/(a^3),
$$
i.e. $S$ is the graded direct summand ring of the complete intersection ring $k[x,y,z,a,b,c]/(x^3,a^3)$ generated by the set of monomials $G:=\{xa,xb,xc,ya,yb,yc,za,zb,zc\}$. By \cite[Theorem (4.3.1)]{GW78}, $S$ is quasi-Gorenstein. By \cite[Proposition (4.2.2)]{GW78}, $S$ has dimension $3$ and it has depth $2$ by \cite[Proposition (4.1.5)]{GW78}. We define the homomorphism $\varphi:k[Z_1,\ldots,Z_9]\rightarrow S$ by setting $Z_i\mapsto G_i$. Then the ideal $\mathfrak{b}:=\text{ker}\ \varphi$ of $k[Z_1,\ldots,Z_9]$ is generated by the $2$-sized minors of the matrix $M:=\left(\begin{array}{ccc}
Z_{1} & Z_{2} & Z_{3}\\
Z_{4} & Z_{5} & Z_{6}\\
Z_{7} & Z_{8} & Z_{9}
\end{array}\right)$	
as well as the elements 
\begin{center}
\begin{align}\label{ExtraGenerators} &Z_1^3,Z_2^3,Z_3^3,Z_4^3,Z_7^3,\nonumber \\ & Z_1^2Z_2,Z_1^2Z_3,Z_1Z_2^2,Z_1Z_3^2,Z_2^2Z_3,Z_2Z_3^2,Z_1Z_2Z_3, \nonumber\\ & Z_1^2Z_4,Z_1^2Z_7,Z_1Z_4^2,Z_1Z_7^2,Z_4^2Z_7, Z_4Z_7^2,Z_1Z_4Z_7.
\end{align}
\end{center} 

So we have $S=k[Z_1,\ldots,Z_9]/\mathfrak{b}$. Now set $A:=k[Z_1,\ldots,Z_9,Y]$ and let $\mathfrak{a}$ be the ideal of $A$ generated by the equations (\ref{ExtraGenerators}) as well as the $2$-sized minors of the matrix $M$ with two exceptions: $Z_4Z_7Y-Z_6Z_8+Z_5Z_9$ instead of the determinant of $\left(\begin{array}{cc}
Z_{5} & Z_{6}\\
Z_{8} & Z_{9}
\end{array}\right)$ and $Z_1Z_7Y-Z_3Z_8+Z_2Z_9$ instead of the determinant of $\left(\begin{array}{cc}
Z_{2} & Z_{3}\\
Z_{8} & Z_{9}
\end{array}\right)$. 
Let us set $R:=A/\mathfrak{a}$ and suppose that $y$ is the image of $Y$ in $R$. Thus, we have $S=R/yR$. With the aid of the following Macaulay$2$ commands, one can verify that $y \in R$ is a regular element and $R$ is not quasi-Gorenstein.
	
$\\
i1 : A=\text{QQ}[Z_1..Z_9,Y,\text{Degrees}=>\{9:1,0\}]\  \\
o1 = A\\	
o1 : \text{PolynomialRing}\\
i2 : a=\text{ideal}(Z_6*Z_7-Z_4*Z_9,Z_5*Z_7-Z_4*Z_8,Z_3*Z_7-Z_1*Z_9,Z_2*Z_7-Z_1*Z_8,Z_3 
*Z_5-Z_2*Z_6,Z_3*Z_4-Z_1*Z_6,Z_2*Z_4-Z_1*Z_5,Z_4*Z_7*Y-Z_6*Z_8+Z_5*Z_9,Z_1*Z_7*Y 
-Z_3*Z_8+Z_2*Z_9,\ \ \ 
Z_1^3,Z 
_2^3,Z_3^3,Z_4^3,Z_7^3,\ \ \ 
Z_1^2*Z_2,Z_1^2*Z_3,Z_1*Z_2^2,Z_1*Z_3^2,Z_2^2*Z_3,Z_2*Z_3^2,Z_1*Z_2*Z_3,\ \ \ 
Z_1^2*Z_4,Z_1^2*Z_7,Z_1*Z_4^2,Z_1*Z_7^2,Z_4^2*Z_7,Z_4*Z_7^2,Z_1*Z_4*Z_7);	\\
o2 : \text{Ideal\  of}\  A\\
i3 : c=\text{ideal}(Z_1^3,Z_2^3,Z_3^3,Z_4^3,Z_7^3,Z_4*Z_7*Y-Z_6*Z_8+Z_5*Z_9);	\\
o3 : \text{Ideal\  of}\  A	\\
i4 : \text{codim}\  c == \text{codim}\  a	\\
o4 = \text{true}\\
i5 : \text{codim}\  c == 6	\\
o5 = \text{true}	\\
i6 : d=c:a;\\	
o6 : \text{Ideal\  of}\  A	\\
i7 : C=\text{module}(d)/\text{module}(c);	\\
i8 : N=C/((\text{ideal\  gens\  ring\ } C)*C);	\\
i9 : \text{numgens\  source\  basis}\ N	\\
o9 = 9	\\
i10 : a:Y == a	\\
o10 = \text{true}
$

Thus, the canonical module of $R$, which is the module $C$ in the above Macaulay2 code, is generated minimally by $9$ elements.  Note that the last command shows that $y$ is a regular element of $R$.  We remark that the quasi-Gorenstein local ring $S=R/yR$ is Gorenstein on its punctured spectrum, which also shows that the depth condition of Theorem \ref{Theorem1}(6) is necessary and  is sharp. Also we remark that,  replacing $QQ$ with $ZZ/\text{ideal}(2)$ in the first command of the above Macaulay2 code, leads to the same conclusion.
\end{counterexample}

Thus, we obtain the following result.

\begin{theorem}
\label{qGor}
There exists an example of a local Noetherian ring $(R,\fm)$, together with a regular element $y \in \fm$ such that the following property holds: $R/yR$ is quasi-Gorenstein and $R$ is not quasi-Gorenstein.
\end{theorem}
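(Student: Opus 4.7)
The plan is to establish this theorem simply by exhibiting the explicit counterexample of Counterexample~\ref{counterexample}. Since the statement is an existence claim, once we produce a single pair $(R,y)$ with the stated properties, we are done. The construction will proceed in three stages: first build a quasi-Gorenstein but non-Gorenstein base ring $S$ with a convenient presentation, then lift the presentation to a one-parameter family so that the quotient mod $y$ recovers $S$, and finally verify the failure of quasi-Gorensteinness of the total space.

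For the first stage, I would take $S$ to be the Segre product $k[x,y,z]/(x^3)\,\#\,k[a,b,c]/(a^3)$. By Goto--Watanabe's results recorded in \cite{GW78}, this ring is quasi-Gorenstein of dimension $3$ and depth $2$ (in particular, it is not Gorenstein, so not cyclic as its own canonical module would otherwise force). Embedding $S$ into $k[Z_1,\ldots,Z_9]$ via the nine Segre monomials, its defining ideal $\mathfrak{b}$ is generated by the $2\times 2$ minors of a $3\times 3$ generic matrix $M$ together with the explicit monomial relations listed in \eqref{ExtraGenerators}. For the second stage, I would introduce a new variable $Y$ and form $A:=k[Z_1,\ldots,Z_9,Y]$ together with the ideal $\mathfrak{a}\subset A$ obtained from $\mathfrak{b}$ by leaving all generators intact \emph{except} two specific minors, which are replaced by $Z_4Z_7Y-Z_6Z_8+Z_5Z_9$ and $Z_1Z_7Y-Z_3Z_8+Z_2Z_9$. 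Setting $R:=A/\mathfrak{a}$ and $y:=$ the class of $Y$, the very construction gives $R/yR\cong S$, which is quasi-Gorenstein.

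The remaining two tasks are (i) showing that $y$ is a regular element of $R$, and (ii) showing that $R$ is not quasi-Gorenstein. Both are handled by the Macaulay2 session recorded in Counterexample~\ref{counterexample}. Regularity of $y$ is the command \texttt{a:Y == a}, returning \texttt{true}. For (ii), the strategy is to compute the canonical module by linkage: one produces an explicit regular sequence $c\subset \mathfrak{a}$ of length equal to the codimension of $\mathfrak{a}$ (the codimension check \texttt{codim c == codim a == 6} is done in the session), and then $\omega_R\cong (c:\mathfrak{a})/c$ by standard duality for complete intersection links. The minimal number of generators of this module, computed via \texttt{numgens source basis N}, equals $9$. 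Since a cyclic module has a single generator, $\omega_R$ is not cyclic, hence $R$ is not quasi-Gorenstein.

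The main obstacle is really a creative rather than technical one: choosing the two modifications of the $2\times 2$ minors so that the resulting ideal $\mathfrak{a}$ (a) contains $y$ as a nonzerodivisor, (b) reduces modulo $y$ back to the Segre ideal $\mathfrak{b}$, and (c) yields a canonical module that fails to be cyclic. The verification of (a)--(c) is mechanical once the ideal is written down, but selecting the right perturbation is what makes the example work; any careless modification would typically produce either a Cohen--Macaulay deformation (hence Gorenstein, by \cite[(1.8)]{A83}) or a deformation that again satisfies $\omega_R\cong R$. Once the Macaulay2 output is recorded, the conclusion of Theorem~\ref{qGor} is immediate.
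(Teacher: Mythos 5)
Your proposal is correct and follows essentially the same route as the paper: the same Segre product $S=k[x,y,z]/(x^3)\,\#\,k[a,b,c]/(a^3)$, the same perturbation of two $2\times 2$ minors by the terms involving $Y$, and the same Macaulay2 verification that $y$ is regular and that $\omega_R$, computed by linkage as $(c:\mathfrak{a})/c$, requires $9$ generators. Nothing further is needed.
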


\begin{remark}
In spite of Counterexample \ref{counterexample}, the quasi-Gorenstein analogue of Ulrich's result \cite[Proposition 1]{U84} holds: A quasi-Gorenstein ring which is a homomorphic image of a regular ring and which is a complete intersection at codimension $\le 1$ has a deformation to an excellent unique factorization domain in view of \cite[Proposition 3.1]{Ta17}.
\end{remark}

The local ring $(R,\fm)$ constructed in Counterexample \ref{counterexample} is not normal. At the time of preparation of this paper, we do not have any concrete counterexample for the deformation of quasi-Gorensteinness in the context of normal domains. For standard graded normal domains, we have Theorem \ref{GrGorenstein}.

\section{Construction of quasi-Gorenstein rings which are not Cohen-Macaulay}

In this section, we offer three different potential instances of quasi-Gorenstein normal domains and we are curious to know whether or not any of these instances of quasi-Gorenstein normal (local) domains admits a deformation to a quasi-Gorenstein ring.

\begin{example}
\label{DefNormal}
\begin{enumerate}
\item
Let $k$ be any field with $\Char(k)\neq 3$ and suppose that $S$ is the Segre product of the cubic Fermat hypersurface: 
$$
k[x,y,z]/(x^3+y^3+z^3) \# k[a,b,c]/(a^3+b^3+c^3).
$$
Then in view of \cite{GW78}, $S$ is a quasi-Gorenstein normal domain of dimension $3$ and depth $2$ such that $\Proj(S)$ is the product of two elliptic curves and so $\Proj(S)$ is an Abelian surface. In contrast to Counterexample \ref{counterexample}, we expect that any deformation of $S$ would be again quasi-Gorenstein. In view of Theorem \ref{Theorem1}(1), perhaps it is worth remarking that, when  characteristic of $k$ varies over the prime numbers distinct from $3$, $S$ can be either $F$-pure or non-$F$-pure. In the case when $S$ is $F$-pure, any deformation of the local ring of the affine cone attached to $\Proj(S)$ is quasi-Gorenstein due to Theorem \ref{Theorem1}(1). On the other hand, if $\Char(k)=0$, then any standard homogeneous deformation of $S$ is quasi-Gorenstein in view of Theorem \ref{GrGorenstein}.

\item
In contrast to the previous example,  we hereby present an example of a non-Cohen-Macaulay quasi-Gorenstein normal graded domain $S$ with $\Proj(S)=\mathbb{P}_{k}^1\times \mathbb{P}_{k}^1$, where $k$ is a field either of  characteristic zero or  of  prime characteristic $p>0$ such that $p$ varies over a Zariski-dense open (cofinite) subset of prime numbers. The construction of such a quasi-Gorenstein normal domain is much more complicated than the previous one,   and the ring $S$ has to be a non-standard graded ring. Thanks to Demazure's theorem \cite{D88}, any (not necessarily quasi-Gorenstein) normal $\mathbb{N}_0$-graded ring $R=\bigoplus_{n\in\mathbb{N}_0}R_n$ with $X:=\Proj(R)=\mathbb{P}^1\times \mathbb{P}^1$ is the generalized section ring:
$$
R=R(X,D)=\bigoplus_{n\in\mathbb{N}_0}H^0\big(X,\mathcal{O}_X(\lfloor nD\rfloor)\big)
$$
for some rational coefficient Weil divisor $D\in \text{Div}(X,\mathbb{Q})=\text{Div}(X)\otimes \mathbb{Q}$ such that $nD$ is an ample Cartier divisor for some $n\gg 0$ (see \cite[Theorem, page 203]{W81} for the general statement of this fact and also \cite[\S1]{W81}  for the definitions and the background). We shall give an example of a rational Weil divisor $D$ on $\mathbb{P}^1\times \mathbb{P}^1$ whose generalized section  ring $R(X,D)$ is a non-Cohen-Macaulay quasi-Gorenstein normal domain with $a$-invariant $5$, and we will also present $R(X,D)$ explicitly as the Segre product of two hypersurfaces.\footnote{A non-Cohen-Macaulay section ring, whose projective scheme is $\mathbb{P}^1\times \mathbb{P}^1$, is given in \cite[Example (2.6)]{W81}. Here a non-Cohen-Macaulay quasi-Gorenstein normal domain will be explicitly given.} On the genus zero smooth curve $\mathbb{P}^1=\Proj(k[x,y])$ (respectively, with different coordinates, $\mathbb{P}^1=\Proj(k[w,z])$), consider the $\mathbb{Q}$-divisor 
$$
D_1:=2P_0-\sum_{i=1}^35/8P_i,
$$
where $P_i$ corresponds to the prime ideal, $x+iy$, for $i=0,\ldots,3$, respectively,
$$
D_2:=5Q_0-\sum_{i=1}^91/2Q_i,
$$
where $Q_i$ corresponds to the prime ideal $w+iz$ for $i=0,\ldots,9$. We follow the notation used in the end of the statement of  \cite[Theorem (2.8)]{W81}, and then we have $D_1'=\sum_{i=1}^37/8P_i$ and $D'_2=\sum_{i=1}^91/2Q_i$. Using the fact  that $K_{\mathbb{P}^1}=\mathcal{O}_{\mathbb{P}^1}(-2)$ is the canonical divisor of $\mathbb{P}^1$, one can easily verify that both of the $\mathbb{Q}$-divisors $K_{\mathbb{P}^1}+D'_1-5D_1$ and $K_{\mathbb{P}^1}+D'_2-5D_2$ are principal (integral) divisors and hence by \cite[Corollary (2.9)]{W81}, one can conclude that the section rings $G:=R(\mathbb{P}^1,D_1)$ and $G':=R(\mathbb{P}^1,D_2)$ are both Gorenstein rings with $a$-invariant $5$ (see also \cite[Example (2.5)(b)]{W81} and \cite[Remark (2.10)]{W81}). It follows that the Segre product $S:=G \# G'$ is a quasi-Gorenstein ring. In the sequel, we will give a presentation of $S$ and we show that it is not Cohen-Macaulay.

\begin{itemize}
\item
\textbf{Presentation of $G'$:}
Applying \cite[Chapter IV, Theorem 1.3 (Riemann-Roch)]{H83} we have $H^0\big(\mathbb{P}^1,\mathcal{O}_{\mathbb{P}^1}(\lfloor 2nD_2 \rfloor)\big)=H^0\big(\mathbb{P}^1,\mathcal{O}_{\mathbb{P}^1}(n)\big)$ is an $(n+1)$-dimensional vector space for each $n\ge 0$ (because $\lfloor 2nD_2 \rfloor$ has degree $n$, $K_{\mathbb{P}^1}-\lfloor 2nD_2 \rfloor=\mathcal{O}_{\mathbb{P}^1}(-n-2)$ is not generated by global sections and $\mathbb{P}^1$ has genus zero). More precisely, we have $\lfloor 2nD_2 \rfloor=10nQ_0-\sum_{i=1}^9nQ_i\sim\mathcal{O}_{\mathbb{P}^1}(n)$ which yields
\begin{align*}	 	
H^0\big(\mathbb{P}^1,\mathcal{O}_{\mathbb{P}^1}(\lfloor 2nD_2 \rfloor)\big)&=\Big\{f/g\in k(w,z)~\Big|~\text{div}(f/g)+10nQ_0-\sum_{i=1}^9nQ_i\ge 0\Big\}&\\&=\Big\{\Big(\big(\prod_{i=1}^{9}(w+iz)^{n}\big)f\Big)/w^{10n}~\Big|~f\in k[w,z]_{[n]}\Big\}.
\end{align*}
Consequently, $G'_{[2n]}$ is generated by $G'_{[2]}$ for each $n\ge 2$ (as the elements of the ring $G'$). Similarly, we can see that $H^0\big(\mathbb{P}^1,\mathcal{O}_{\mathbb{P}^1}(\lfloor 9D_2 \rfloor )\big)$ is the $1$-dimensional $k$-vector space spanned by $\big(\prod_{i=1}^{9}(w+iz)^{5}\big)/w^{45}$ which clearly provides us with a new generator of our section ring $G'$. One can then observe that, for $n\neq 4$,  $H^0\big(\mathbb{P}^1,\mathcal{O}_{\mathbb{P}^1}(\lfloor (2n+1)D_2 \rfloor)\big)$ is either zero for $n\le 3$ or it is an $(n-3)$-dimensional vector space  generated by ${G'}_{[9]}$ and ${G'}_{[2n-8]}$. It follows that $G'$ has three generators and since it has dimension $2$, we get
$$
G'=k[A',B',C']/(f)
$$
for some irreducible element $f \in k[A',B',C']$ of degree $18$, such that $A'$ and $B'$ have degree $2$ while $C'$ has degree $9$. Namely, $f=C'^2-(\prod_{i=1}^9(A'+iB'))$.

\item
\textbf{Presentation of $G$:} Similarly as in the previous part, for any $m\ge 0$ and $0\le k\le 7$, setting $0\neq n:=8m+k$, we can observe that  
\begin{align*}
&H^0\big(\mathbb{P}^1,\mathcal{O}_{\mathbb{P}^1}(\lfloor nD_1 \rfloor)\big)=&\\&\begin{cases}
(m+1)\text{-dimensional vector space\ }H^0\big(\mathbb{P}^1,\mathcal{O}_{\mathbb{P}^1}(m)\big),& k\overset{3}{\equiv} 0  \\
m\text{-dimensional vector space\ }H^0\big(\mathbb{P}^1,\mathcal{O}_{\mathbb{P}^1}(m-1)\big), & k\overset{3}{\equiv} 1 \\
\max\{0,(m-1)\}\text{-dimensional vector space\ }H^0\big(\mathbb{P}^1,\mathcal{O}_{\mathbb{P}^1}(m-2)\big),& k\overset{3}{\equiv} 2
\end{cases}
\end{align*}
that $G_{[n]}=H^0\big(\mathbb{P}^1,\mathcal{O}_{\mathbb{P}^1}(\lfloor nD_1 \rfloor)\big)$ is generated by $G_{[n-8]}$ and $G_{[8]}$ in the case where $m\ge 2$ and $n \ne 18,21$, that $G_{[6]}$, $G_{[9]}$, $G_{[12]}$ and $G_{[15]}$ are generated by $G_{[3]}$,  that $G_{[11]}$ (respectively, $G_{[14]}$) is generated by $G_{[8]}$ and $G_{[3]}$ (respectively, $G_{[11]}$ and $G_{[3]}$), that $G_{[18]}$ (respectively, $G_{[21]}$) is generated by $G_{[3]}$ and $G_{[15]}$ (respectively, $G_{[3]}$ and $G_{[18]}$) and that $G$ is zero in the remained unmentioned degrees. Consequently, $$G=k[A,B,C]/(g)$$ such that $\deg(A)=3$, $B$ and $C$ are of degree $8$ and $g=A^8-\prod_{i=1}^3(B+iC)$ (Note that $A$ corresponds to the element $\prod_{i=1}^3(x+iy)^2/x^6$, $B$ corresponds to $\prod_{i=1}^3\big((x+iy)^5x\big)/x^{16}$ and $C=\prod_{i=1}^3\big((x+iy)^5y\big)/x^{16}$).

\item
\textbf{Non-Cohen-Macaulayness of $S=G\#G'$:} Note that by Serre duality theorem,

\begin{align*}
H^{1}\big(\mathbb{P}^{1},\mathcal{O}_{\mathbb{P}^{1}}(\lfloor3D_{2}\rfloor)\big)=\text{Hom}\big(\mathcal{O}_{\mathbb{P}^{1}}(\lfloor3D_{2}\rfloor),K_{\mathcal{\mathbb{P}}^{1}}\big)&=H^{0}\Big(\mathbb{P}^{1},\mathscr{H}om\big(\mathcal{O}_{\mathbb{P}^{1}}(\lfloor3D_{2}\rfloor),\mathcal{O}_{\mathbb{P}^{1}}(-2)\big)\Big)
&\\ &=H^{0}\Big(\mathbb{P}^{1},\mathscr{H}om\big(\mathcal{O}_{\mathbb{P}^{1}}(-3),\mathcal{O}_{\mathbb{P}^{1}}(-2)\big)\Big)
&\\ &=H^{0}\big(\mathbb{P}^{1},\mathcal{O}_{\mathbb{P}^{1}}(1)\big)
\end{align*}

is a non-zero $2$-dimensional vector space. Thus,
\begin{align*}
H_{S^{+}}^{2}(S)_{[3]}&=H^{1}\big(\mathbb{P}^{1}\times\mathbb{P}^{1},\mathcal{O}_{\mathbb{P}^{1}}(\lfloor3D_{1}\rfloor)\boxtimes\mathcal{O}_{\mathbb{P}^{1}}(\lfloor3D_{2}\rfloor)\big)&\\&=\bigg(H^{0}\big(\mathbb{P}^{1},\mathcal{O}_{\mathbb{P}^{1}}(\lfloor3D_{1}\rfloor)\big)\otimes H^{1}\big(\mathbb{P}^{1},\mathcal{O}_{\mathbb{P}^{1}}(\lfloor3D_{2}\rfloor)\big)\bigg)&\\&\oplus \bigg(H^{1}\big(\mathbb{P}^{1},\mathcal{O}_{\mathbb{P}^{1}}(\lfloor3D_{1}\rfloor)\big)\otimes\underset{=G'_{[3]}=0}{\underbrace{H^{0}\big(\mathbb{P}^{1},\mathcal{O}_{\mathbb{P}^{1}}(\lfloor3D_{2}\rfloor)\big)}}\bigg)
&\\& = G_{[3]}\otimes H^{0}\big(\mathbb{P}^{1},\mathcal{O}_{\mathbb{P}^{1}}(1)\big)
&\\&\neq 0,
\end{align*}
which implies that $S$ is not-Cohen-Macaulay as required.
\end{itemize}
	 
\item
We give an explicit construction of a unique factorization domain (so, being quasi-Gorenstein in our case), not being Cohen-Macaulay of depth 2 with arbitrarily large dimension, as an invariant subring. Fix a prime number $p \ge 5$ and an algebraically closed field $k$ of characteristic $p$. Consider the $k$-automorphism on the polynomial algebra $k[x_1,\ldots,x_{p-1}]$ defined by
\begin{align*}
\sigma(x_1) &=x_1,\\
\sigma(x_2) &=x_2+x_1,\\
&\vdots                    \\
\sigma(x_{p-1}) &=x_{p-1}+x_{p-2}.
\end{align*}
Now we have $\sigma((x_1,\ldots,x_{p-2}))=(x_1,\ldots,x_{p-2})$ which is a prime ideal, so $\sigma$ gives rise to an action on the localization $R:=k[x_1,\ldots,x_{p-1}]_{(x_1,\ldots,x_{p-2})}$. Let $\fm$ be the unique maximal ideal of $R$. Let $\langle \sigma \rangle$ be the cyclic group generated by $\sigma$. Then the ring of invariants $R^{\langle \sigma \rangle}$ enjoys the following properties:

\begin{enumerate}
\item[$\bullet$]
$R^{\langle \sigma \rangle}$ is a local ring which is essentially of finite type over $k$, $R^{\langle \sigma \rangle}$ is a unique factorization domain with a non Cohen-Macaulay isolated singularity, $\dim R^{\langle \sigma \rangle}=p-2$ and $\depth R^{\langle \sigma \rangle}=2$. In particular, $R^{\langle \sigma \rangle}$ is quasi-Gorenstein.
\end{enumerate}

Since $R$ has characteristic $p$, $\sigma$ generates the $p$-cyclic action by construction. Then $R^{\langle \sigma \rangle} \hookrightarrow R$ is an integral extension and we thus have $\dim R^{\langle \sigma \rangle}=p-2$. Quite obviously,
$$
\big(\sigma(x_1)-x_1,
\sigma(x_2)-x_2,\ldots,\sigma(x_{p-1})-x_{p-1}\big)=\big(x_1,\ldots,x_{p-2}\big)
$$
is an $\fm$-primary ideal. By \cite[Lemma 3.2]{Pe83} (see also \cite{Fo81} for related results), the map $R^{\langle \sigma \rangle} \to R$ ramifies only at the maximal ideal. Since $R$ is regular, $R^{\langle \sigma \rangle}$ has only isolated singularity. By \cite[Corollary 1.6]{Pe83}, we have $\depth R^{\langle \sigma \rangle}=2$. Since $R^{\langle \sigma \rangle}$ has dimension $p-2 \ge 3$, we see that $R^{\langle \sigma \rangle}$ is not Cohen-Macaulay. It remains to show that $R^{\langle \sigma \rangle}$ is a unique factorization domain. For this, let us look at the action of $\langle \sigma \rangle$ on $k[x_1,\ldots,x_{p-1}]$. Then by \cite[Proposition 16.4]{F73}, $k[x_1,\ldots,x_{p-1}]^{\langle \sigma \rangle}$ is a unique factorization domain and we have
$$
R^{\langle \sigma \rangle}=\big(k[x_1,\ldots,x_{p-1}]_{(x_1,\ldots,x_{p-2})}\big)^{\langle \sigma \rangle}=\big(k[x_1,\ldots,x_{p-1}]^{\langle \sigma \rangle}\big)_{k[x_1,\ldots,x_{p-1}]^{\langle \sigma \rangle} \cap (x_1,\ldots,x_{p-2})}.
$$
Since being a unique factorization domain is preserved under localization, it follows that $R^{\langle \sigma \rangle}$ is a unique factorization domain, as desired. The paper \cite{MS11} examines more examples of non Cohen-Macaulay domains that are unique factorization domains.
\end{enumerate}
\end{example}

\begin{acknowledgement}
The authors are grateful to A. K. Singh for suggesting Example \ref{DefNormal}$(2)$ as well as his comments on a first draft of this paper. Our thanks are also due to L. E. Miller. The first author was partially supported by JSPS Grant-in-Aid for Scientific Research(C) 18K03257. The second author was partially supported by JSPS Grant-in-Aid for Young Scientists 20K14299 and JSPS Overseas Research Fellowships.
\end{acknowledgement}


\begin{thebibliography}{99}
\bibitem{A83}
Y. Aoyama, \emph{Some basic results on canonical modules} J. Math. Kyoto Univ.  {\bf23} (1983), 85--94.


\bibitem{AG85}
Y. Aoyama and S. Goto, \emph{On the endomorphism ring of the canonical module}, J. Math. Kyoto. Univ. {\bf 25} (1985), 21--30.


\bibitem{Bd14}
B. Bhatt and A. J. de Jong, \emph{Lefschetz for local Picard groups}, 
Annales scientifiques de l\'ENS {\bf47}, fascicule 4 (2014), 833--849.


\bibitem {BS12}
M. Brodmann and R. Sharp,
\emph{Local cohomology. An algebraic introduction with geometric applications}, Cambridge University Press, (2012).


\bibitem{BH98}
W. Bruns and J. Herzog, \emph{Cohen-Macaulay rings}, Cambridge University
Press {\bf39}, revised edition 1998.

 
\bibitem{CL94}
F. Call  and G. Lyubeznik,
\emph{A simple proof of Grothendieck's theorem on the parafactoriality of local rings}, Commutative algebra: syzygies, multiplicities, and birational algebra (South Hadley, MA, 1992),  15--18, Contemp. Math., {\bf159}, Amer. Math. Soc., Providence, RI, 1994. 


\bibitem{C14}
F. Catanese, 
\emph{Subcanonical graded rings which are not Cohen-Macaulay}, Recent advances in algebraic geometry. A volume in honor of Rob Lazarsfeld's 60th birthday: Cambridge University Press,  92--101 (2014).


\bibitem{D88}
M. Demazure,
\emph{Anneaux gradu\'es normaux; Introduction \`a la th\'eorie des singularit\'es. II: M\'ethodes alg\'ebriques et g\'eom\'etriques}, Paris: Hermann (1988), 35--68.


\bibitem{Fo81}
J. Fogarty, \emph{On the depths of local rings of invariants of cyclic groups},
Proc. Amer. Math. Soc. \textbf{83} (1981), 448--452.


\bibitem{F73}
R. Fossum, \emph{The Divisor Class Group of a Krull Domain},
Springer (1973).


\bibitem{GW78}
S. Goto  and K. Watanabe, \emph{On graded rings I}, J. of the Mathematical Society of Japan {\bf30} (1978), 179--213.


\bibitem{Gro1}
A. Grothendieck, \emph{\'Elements de G\'eom\'etrie Alg\'ebrique III}, Publications Math. I.H.E.S. {\bf11} (1961), {\bf17} (1963).



\bibitem{H70}
R. Hartshorne, \emph{Ample subvarieties of algebraic varieties},
Lecture Notes in Math. {\bf156} Springer (1970).


\bibitem {H83}
R. Hartshorne, \emph{Algebraic geometry}, Graduate Texts in Mathematics, {\bf52} Springer (1983).
	

\bibitem{HMS14}
J. Horiuchi, L. E. Miller and K. Shimomoto,
\emph{Deformation of $F$-injectivity and local cohomology},
Indiana Univ. Math. J. {\bf63}  (2014), 1139--1157.


\bibitem{Illusie}
L. Illusie, 
\emph{Grothendieck's existence theorem in formal geometry},
Fundamental Algebraic Geometry, Mathematical Surveys and Monographs
{\bf123} AMS 2005.


\bibitem{KK10}
J. Koll\'ar and S. Kov\'acs, \emph{Log canonical singularities are Du Bois},
J. of AMS {\bf23} (2010), 791--813.


\bibitem{K99}
S. Kov\'acs, \emph{Rational, Log Canonical, Du Bois Singularities: 
On the Conjectures of Koll\'ar and Steenbrink}, Compositio Mathematica 
{\bf118} (1999), 123--133.


\bibitem{KS16}
S. Kov\'acs and K. Schwede,
\emph{Du Bois singularities deform}, Adv. Stud. Pure Math.
Minimal Models and Extremal Rays (Kyoto, 2011) {\bf70} (2016) 49--65.


\bibitem{M14}
L. Ma, \emph{Finiteness properties of local cohomology for F-pure local rings},
Int. Math. Res. Not. IMRN  {\bf20} (2014), 5489--5509.


\bibitem{MSS16}
L. Ma, K. Schwede and K. Shimomoto,
\emph{Local cohomology of Du Bois singularities and applications to families},
Compositio Math. \textbf{153} (2017), 2147--2170.


\bibitem{MQ16}
L. Ma and P. H. Quy,
\emph{Frobenius actions on local cohomology modules and deformation},
Nagoya Math. J. \textbf{232} (2018), 55--75.


\bibitem{MS11}
A. Marcelo and P. Schenzel, \emph{Non-Cohen-Macaulay unique factorization domains in small dimensions},
J. Symbolic Comput. {\bf46} (2011), 609--621.


\bibitem{Pe83}
B. Peskin, \emph{Quotient singularities and wild $p$-cyclic actions},
J. Algebra {\bf81} (1983), 72--99.


\bibitem{RS06}
G. V. Ravindra and V. Srinivas, \emph{The Grothendieck-Lefschetz
theorem for normal projective varieties}, J. Algebraic Geometry
{\bf15} (2006), 563--590.


\bibitem{S98}
P. Schenzel, 	\emph{On the use of local cohomology in algebra and geometry, Six lectures on commutative algebra. Lectures presented at the summer school, Bellaterra, Spain, July 16--26, 1996,}, Basel: Birkh\"auser (1998), 241--292.

\bibitem{Sh17}
K. Shimomoto, \emph{On the semicontinuity problem of fibers and global F-regularity},
Comm. Algebra  {\bf45}  (2017), 1057--1075.


\bibitem{Ta17}
E. Tavanfar,
\emph{Reduction of the small Cohen-Macaulay conjecture to excellent unique factorization domains}, Arch. Math. {\bf109} (2017), 429--439.


\bibitem{Ta18}
E. Tavanfar, 
\emph{A sufficient condition for the almost Cohen-Macaulayness of the absolute integral closure in dimension 4},
Comm. Algebra {\bf46} (2018), 2574--2581.


\bibitem{TT16}
E. Tavanfar and M. Tousi, \emph{A study of quasi-Gorenstein rings}, 
J. Pure Applied Algebra {\bf222} (2018), 3745--3756.


\bibitem{TW89}
M. Tomari and K-i. Watanabe,
\emph{Filtered rings, filtered blowing-ups and normal two-dimensional singularities with ``star-shaped'' resolution}
Publ. Res. Inst. Math. Sci. {\bf25} (1989), 681--740.


\bibitem{U84}
B. Ulrich,
\emph{Gorenstein rings as specializations of unique factorization domains},
J. Algebra {\bf86} (1984), 129--140.


\bibitem{W81}
K-i. Watanabe,
\emph{Some remarks concerning Demazure's construction of normal graded rings.},
Nagoya Math. J. {\bf83} (1981), 203--211.
\end{thebibliography}
\end{document}